\definecolor{MyDarkBlue}{rgb}{0,0.08,0.45} 
\definecolor{MyDarkRed}{rgb}{.8,0.09,0.5} 
\definecolor{Red}{rgb}{1,0,0} 
\definecolor{Blue}{rgb}{0,0,1}
\def\R{{\mathbb R}}
\def\C{{\mathbb C}}
\def\Z{{\mathbb {Z}}}
\def\cF{{ \mathcal F}}
 \def\fer{{g}}
\def\rf{{r}}
 \def\snc{{\rm sinc}}
\newcommand\what[1]{\widehat{#1}}
\def\minboundu{ \mathop {\alpha_{11}}}
\newcommand\maxboundu{\mathop  {\beta_{11}}}
\newcommand\minboundd{\mathop  {\alpha_{22}}}
\newcommand\maxboundd{\mathop {\beta_{22}}}
\def\PP{{P}}
\def\Ld{{L^2(\R)}} 
\def\Lu{{L^1(\R)}} 
 \def\S{{R}}
  \def\ldCN {{{\ell}^2 (\Z;{\C}^N)}} 
\def \Tfi{{T^{}_{\Phi,\hskip 0.05em t_o}}}
\def\Tfia{{T^{*}_{\Phi,\hskip 0.05em t_o}}}
\def\Tfistar{{T^{}_{\Phi^*,\hskip 0.05em t_o}}}
\def\Tfistara{{T^{*}_{\Phi^*,\hskip 0.05em t_o}}}
\def\Efi{{E_{\Phi,\hskip 0.05em t_o}}}
\def\EPfi{{E_{P\Phi,\hskip 0.05em t_o}}}
\def\Efistar{{E_{\Phi^*,\hskip 0.05em t_o}}}
\def\BL{{B_\omega}}
 \newcommand\cU {\mathop{ \mathcal U}}
 \newcommand\cI {\mathop{ \mathcal I}}
   \newcommand\Sfi {S }
 \newcommand\Cfi {C}
\def\squareforqed{\hbox{\rlap{$\sqcap$}$\sqcup$}}
\def\qed{\ifmmode\squareforqed\else{\unskip\nobreak\hfil
\penalty50\hskip1em\null\nobreak\hfil\squareforqed
\parfillskip=0pt\finalhyphendemerits=0\endgraf}\fi} 
 \newtheorem{theorem}{Theorem}[section] 
\newtheorem{corollary}[theorem]{Corollary}
\newtheorem{proposition}[theorem]{Proposition}
\theoremstyle{remark}
\numberwithin{equation}{section}
\begin{document}
 
\title[Recovery of missing  samples]
{ Stability of the recovery  of Missing \\ Samples   in   Derivative Oversampling }

\subjclass[2000]{} 

\keywords{frame, Riesz basis, shift-invariant space, sampling formulas, band-limited functions.}
 
 \author{Paola Brianzi and Vincenza Del Prete  }

  \address{Dipartimento di Matematica\\
 Universit\`a di Genova, via Dodecaneso 35, 16146 Genova \\ Italia}
\begin{abstract}     This paper deals 
 with  the    problem of  reconstructing a band-limited signal when a  finite subset of   its samples and of  its derivative are missing. The technique  used, due  to P.J.S.G. Ferreira,  is based on the  use of a particular frame for band-limited functions and the relative oversampling formulas.  We study the eigenvalues of the matrices  arising in the  procedure  of   recovering  the  lost samples,  finding    estimates of their eigenvalues  and their dependence on   the oversampling parameter  and on the number of missing  samples. 
 When the missing samples are consecutive, the problem may become very ill-conditioned. We present a numerical procedure      to  overcome this difficulty, also in presence of noisy data, by using Tikhonov regularization techniques.  
  \end{abstract}
 
\maketitle
\setcounter{section}{0}

  \section{Introduction} \label{s:Introduction}
This paper deals with the  problem of  recovering missing samples of band-limited functions and  of its derivative via  frame reconstruction sampling formulas.  A band-limited signal is a function which belongs to the  space $\BL$  of functions in $\Ld$ whose Fourier transforms have support in $[-\omega,\omega]$. 
 Functions in  this   space can be   expanded     in terms of the  
orthonormal basis of translates of the {\it sinc} function. The  
coefficients of the expansion   are the  samples of the function  at  a 
uniform grid on $\R$,  with  ``density" $ {\omega}/{\pi}$ ({\it Nyquist} 
density).  Such expansions, called  sampling formulas,    have been 
generalized by replacing the orthonormal basis with  more general 
families, like  Riesz bases and frames, formed by the translates of 
one or more functions ({\it generators}). Frames, unlike Riesz basis, 
are overcomplete and  their  redundancy  provides a perfect tool  
for the recovery of missing data. \par 
  The problem of recovering missing samples   has been 
investigated  first by  P.J.S.G. Ferreira in  \cite{F}, where it is shown   
that,  in the case of  a generalized Kramer sampling,   under 
suitable oversampling assumptions, any  finite   set of  missing samples can be recovered from the others by solving a  linear system   $(I-S)x=b$, where the matrix $S$ is positive definite. Moreover  the author studies  the eigenvalues of 
the matrix $S$ in dependence of the  oversampling parameter  and 
the number of missing samples. Successively    D.M.S. Santos and Ferreira    considered   the case of   a   two-channel  derivative  oversampling formula  obtained  by 
projecting the generators of a   Riesz  basis   of  the space $\BL$    and their  
duals    into   the space  $B_{\omega_a} $ with $\omega_a<\omega$   \cite{SF}.
With this technique  they  obtain  a pair of dual frames, 
  although  the dual frame is not  the  
canonical one. In  their paper the authors show that      a finite number of
missing samples either of the function   or of  its derivative can be
recovered, solving in each case a  non singular     linear system  
similar to the one-channel case. However the authors do not   consider  the case  when  samples both of the function and the derivative are  simultaneously missing.   
\\  The technique of Santos and Ferreira     has been applied
to more general two-channels by J. M. Kim and K. H. Kwon \cite{KK}, 
who gave sufficient conditions for the    recovery of  missing samples,   from a single channel and  from both channels. However,   as the   authors observe,  these conditions do  not apply to    the derivative sampling formulas studied in \cite{SF}.  
\par 
Successively, in  \cite{DP}, the author 
gave  new derivative    oversampling  formulas of any order, 
where  the  dual  generator is  the canonical dual  and in \cite{DP1} 
found an  expression
 of the dual in the two-channel  case.  The use of the canonical 
dual  allowed the author to prove in \cite{DP1} that simultaneously 
missing samples of the  function  and of the derivative can be 
recovered.  However the generators of the canonical duals are 
much less explicit than the non-canonical dual used by Santos and 
Ferreira see  \cite[(5.2) p. 178]{DP1} and (\ref{antif}).\\
 In this paper we   study the problem of the recovery of missing  samples  of the  function  and of its derivative    
in Ferreira's  two-channel  derivative  formula. The  technique  we use, first proposed in     \cite{SF},      extends in a natural  way 
that  for  one channel   in \cite{F}  and  consists in 
solving   a system $(I-\Sfi)X=\Cfi$, 
 where the    matrix   $\Sfi$   (see (\ref{Sfi})) is a block matrix depending on the dual generators  and   on the position   of the missing samples, and the  unknowns $X$ are the missing samples of the  function and its derivative. To the best of our knowledge, no results are known about the possibility of solving  this system, i.e. of recovering simultaneous missing samples of the function and of its derivative.   \\
 We obtain estimates of the minimum and maximum eigenvalues of 
the block submatrices  of the matrix $\Sfi$, show that in  some 
cases the matrix  reduces to a  lower triangular matrix and provide 
several numerical 
experiments  on   the dependence  of the eigenvalues   on    the 
parameters  of the problem, like   the oversampling parameter $\rf$ and  the number of missing samples.  Moreover, we  experiment the recovery of  missing samples  and the reconstruction of a signal,  paying  particular attention to the computational aspects,  analyzing  the ill-conditioned  problem of    contiguous samples,   also  with noisy   data.\\
  The paper is organized as follows. In Section 2  we establish 
notation  and collect some  results   to be used  later.  In Section 3  we  describe the technique  for   the recovery of the  missing samples.
  Section 4 is dedicated to  the study of the 
eigenvalues  of the  coefficient   matrices in the  system,  mostly   when the missing samples are  equidistant. In this case,  we find  estimates of the minimum and maximum eigenvalues   of  the  
diagonal submatrices $S_{11}$ and $S_{22}$ of $\Sfi$  (see (\ref{Sfi})). Moreover,  we find that in some cases   the matrix $S$ is   lower triangular   and that half of its eigenvalues are equal to $2\rf-\rf^2$, half to   $\rf^2$, where $\rf$ is the oversampling parameter. This extends to two channels  a  result  in \cite{F}.  
We also present some    numerical experiments  supporting the theory.  Finally, in Section 5, we  
   analyze   the   case of contiguous missing samples   both for one and two channels, when the problem may become   very    ill-conditioned.  We present a numerical procedure   to solve it, also in presence of   noisy data, via regularization techniques.  
 
 \section{Preliminaries} \label{s:Preliminaries}
 In this section  we   collect some  results on frames       to be used later. We begin by introducing some notation. 
The Fourier transform of a  function  $f$ in 
  $  \Lu $ is
 $$ \cF f(\xi)={\hat f}(\xi)= \frac{1} {\sqrt{2\pi}} \int f(t)e^{-it\xi}dt.$$    
 In this paper    vectors in $\C^N$ are  to  be considered as column-vectors, however, to save space,   we shall write $x=(x_1,x_2, \dots, x_N)$ to denote the  column-vector whose components are $x_1, \dots,x_N$.  
  Let $t_o$ be a positive number; we shall say that  a subspace $H$ of $\Ld$ is $t_o$-shift-invariant if $H$ is invariant  under  translation by $t_o.$
 Given a subset  $\Phi=\{ \varphi_j,  j=1,2 \}$ of $ H,$     we denote by $ \Efi$ the set    $$ \Efi =\{\tau_{nt_o}\varphi_j  \quad  n\in\Z, \ j=1,2\},$$ where   $\tau_a f(x)=f(x+a).$   
 The family  $ \Efi$ is a   frame  for $H$  if  and  only if there exist two constants $0<A\le B$ such that $A\|f\|^2\leq \sum_{j=1}^{2}\sum_{n\in \Z}  |\langle f,  \tau_{n t_o}\varphi_j \rangle  |^2 \leq B \|f\|^2 $ for all $f\in  H;$ the constants $A$ and $B$  are called frame bounds. 
 Denote  by   $\Tfia: H \rightarrow \ldCN$ the    adjoint of $\Tfi,$ defined by $(\Tfia f)_j(n)= \langle f,  \tau_{n t_o}\varphi_j  \rangle, $ $ n\in \Z ,  j=1,2.$
The operator  $\Tfi\Tfia:H\rightarrow H$  is called {\it frame operator.} 
Denote  by  $\Phi^*$ the family  $\{\varphi_j^*, j=1,2\},$   where
\begin{equation}\label{duale}
 \varphi_j^*=(\Tfi\Tfia)^{-1}\varphi_j  \hskip1truecm     j=1,2.
 \end{equation}
If $\Efi$ is a frame for $H$ then    $\Efistar $ is also a frame,   called 
 the  {\it canonical dual frame}, and  
$\Tfi\Tfistara=\Tfistar\Tfia =I$; explicitly    \begin{eqnarray}\label{expansionzero}
f= \sum_{j=1}^{2} \sum_{n\in \Z}  \langle f,  \tau_{n t_o}\varphi_j^* \rangle \tau_{n t_o}\varphi_j= \sum_{j=1}^{2} \sum_{n\in \Z}  \langle f,  \tau_{n t_o}\varphi_j \rangle \tau_{n t_o}\varphi_j^*  \quad  \forall f\in H.
\end{eqnarray}  
  The elements of  $\Phi$ are called {\it generators} of the frame and the elements of $\Phi^*$ {\it canonical dual generators}. Given a frame   $\Efi$ of  a Hilbert space $H,$  the expansion of an element  in terms of 
 the generators is not unique; a frame         $\{\tau_{k t_o} {{\varphi}_1^{{\, }_{d}}} , \  \tau_{k t_o} {{\varphi}_2^{{\, }_{d}}}  ,k\in \Z\}$   such that   \begin{equation}\label{generaleadue}f= \sum_{j=1}^{2} \sum_{n\in \Z}  \langle f,  \tau_{n t_o}{{\varphi}_j^{{\, }_{d}}}\, \rangle \tau_{n t_o}\varphi_j\qquad  \forall f\in H \end{equation} is called  {\it  dual frame} for $\Efi$.
 We remind that     the  coefficients $\{ \langle f,  \tau_{n t_o}\varphi_j^* \rangle, n, j\in \Z\}$ with respect to the \emph{canonical} dual frame have the minimal  $\ell^2$ norm among all the sequences  that represent an element $f$ in terms of a given frame $\Efi$.    If the family   $ \Efi$ is a frame for $H$ and the operator $\Tfi$ is injective, then $ \Efi
 $ is called a  {\it  Riesz basis}. 
\par   Let $\Efi$ be  a    frame   of a  $t_o$-shift-invariant  Hilbert space $H$ with canonical dual generators $\Phi^*$. Denote by $P$ the orthogonal projection on a  $t_o$-shift-invariant subspace $V$ of $H$, then $\EPfi$ and  $ E_{P\Phi^*,t_o}$ are  families of dual frames for $V$, i.e.  
     \begin{eqnarray}\nonumber
 f= \sum_{j=1}^{2} \sum_{n\in \Z}  \langle f,  \tau_{n t_o}P\varphi_j^* \rangle \tau_{n t_o}P\varphi_j= \sum_{j=1}^{2} \sum_{n\in \Z}  \langle f,  \tau_{n t_o}P\varphi_j \rangle \tau_{n t_o}P\varphi_j^* \qquad \forall f\in V. \end{eqnarray}  
Note that   $ E_{P\Phi^*,t_o}$ is not necessarily the canonical dual.\\ 
   In this paper we study  families    $\Efi$, $\Phi=\{\varphi_1, \varphi_2\}$ where $\varphi_1$  and $\varphi_2$ have Fourier transform  $\widehat{ \varphi_1}(x)=\chi_{[-\omega,\omega]}(x) $ and $ \widehat{\varphi_2}(x)=ix\chi_{[-\omega,\omega]}(x) $
  and $t_o$ and $\omega$   are such that $\omega\le \frac{2\pi}{t_o} \le 2\omega$. The interest  of  these  families in applications resides in   their connection with  derivative sampling formulas  for the space $\BL$.   To simplify notation, throughout the paper we shall set  $$h=\frac{2\pi}{t_o}.$$ 
The family $E_{\Psi,t_o}$,  $\Psi=\{\psi_1, \psi_2\}$ defined by 
   \begin{equation}\label{fi1fi2BR} \widehat{ \psi_1}(x)=\chi_{[-h,h]}(x)\hskip 2truecm  \widehat{\psi_2}(x)=ix\chi_{[-h,h]}(x) 
 \end{equation}  is a Riesz basis  for the space $B_{h}$ 
   (see     \cite[p.135]{Hi}).  
   The Fourier transforms of  the dual generators are      
\begin{equation}\label{dualiRiesz1}  \what{\psi_1^*}(x)=
\frac{1}{h}(1-  \frac{|x|}{h})  \chi_{[-h,h]} (x)
\qquad  \what{\psi_2 ^*}(x)=
\frac{i}{h^2} \,{\rm sign}(x) \chi_{[-h,h]}(x),  \end{equation}
hence  the dual generators are
  \begin{equation}\nonumber 
\psi_1^*(x)=\frac{1}{ \sqrt{2 \pi}} \snc^2(\frac{h x}{2}) \qquad \psi_2^*(x)=- \frac{1}{ \sqrt{2 \pi}}\,  x\,  \snc^2(\frac{h x}{2}),
\end{equation} 
where  $\snc(x) ={\sin(x)}/{x}$. For any $f\in B_h$  the coefficients of the  expansion    in     (\ref{expansionzero})  are
  \begin{equation}\nonumber
  \langle f,\, \tau_{-n t_o} \psi_1\rangle=\sqrt{2\pi} f( nt_o)\hskip1truecm \langle f, \, \tau_{-n t_o} \psi_2 \rangle=- \sqrt{2\pi} f'(nt_o)\quad  \forall n\in \Z.\end{equation}
thus       the   expansion formula   (\ref{expansionzero})  becomes
  \begin{equation}   \label{dsamp:riesz} f(x)=\sum_n \, \Bigl( f(nt_o)  \snc^2\frac{h}{2}\big(x-nt_o\big)\, +\, f'(kt_o) (x-nt_o)\,  \snc^2\frac{h}{2}\big(x-nt_o\big) \, \Bigr).
 \end{equation}
for  any $f\in B_h.$ Note that the convergence is uniform. This 
formula is called   a {\it first order derivative sampling formula}.   
The  sampling frequency     for an $N$-channel formula is $\omega/
N\pi$ and  is called {\it Nyquist frequency}.   By using   a frame  instead of 
a Riesz basis, in  \cite{SF} the authors obtained a two-channel   
derivative   formula where the    sampling frequency in each 
channel  is  greater  than  the Nyquist frequency  $\omega/2\pi$
({\it first order derivative oversampling formula}). The frame  is obtained  by  projecting on  $\BL$    the Riesz basis  $E_{\Psi,t_o}$  for $B_{h } $     where   $h=2\pi/t_o$ and 
  $\omega<h.$     We  denote by $\rf$   the ratio \begin{equation}\label{erreffe} \rf= \frac{\omega} {h}=\frac{\omega t_o}{2\pi}.\end{equation} 
Since the projection commutes with  the translations, to project  the family $E_{\Psi}$ and  the dual family, it is sufficient to project on $\BL$  the generators and the dual generators. Thus, by (\ref{fi1fi2BR}), we obtain that    the  generators  of the frame $\Efi$ are  defined by 
 \begin{equation}\label{fi1fi2} \widehat{ \varphi_1}(x)=\chi_{[-\omega,\omega]}(x)\hskip 2truecm  \widehat{\varphi_2}(x)=ix\chi_{[-\omega,\omega]}(x) 
 \end{equation}
and  by  (\ref{dualiRiesz1}) 
we obtain      the Fourier transform of  the dual  generators
    \begin{equation}\label{dualgen}  
   \what{\widetilde{\varphi}_1}(x)=
\frac{\rf}{\omega}(1-  \frac{\rf}{\omega}|x|)  \chi_{[-\omega,\omega]}(x) (x)
\qquad \what{\widetilde{\varphi}_2}(x)=i \frac{\rf^2}{\omega^2}
  \,{\rm sign}(x) \chi_{[-\omega,\omega]}(x).  
  \end{equation} 
A  simple calculation shows that  
  \begin{align}\nonumber
\widetilde{\varphi}_1(x)&= \frac{1}{\sqrt{2\pi}}\Big[2 \rf
\, 
(1-\rf
)\snc( \omega  x ) +\rf^2 
 \snc^2( \frac {\omega  x}{2} ) \Big] \\ { \ } \label{antif}\\
\widetilde{\varphi}_2(x)&=- \frac{1}{\sqrt{2\pi}}\ x  \rf^2
 \,  \snc^2( \frac {\omega  x}{2} ). \nonumber
 \end{align} 
Thus we obtain the  following derivative oversampling  
  formula  
\begin{equation}\label{proiettata1}
f(x)=\sqrt{2\pi}\sum_{k\in Z}\Bigl( f(k t_o)\tau_{kt_o} \widetilde{\varphi}_1   -  f'(k t_o)
  \tau_{kt_o} \widetilde{\varphi}_2  \Bigr) \quad \forall f\in B_{\omega} \end{equation}
  \cite{SF}.   We observe  that $1/\rf={2\pi}/{\omega t_o}$ is the  ratio between the sampling frequency 
  and the  Nyquist frequency and that   
$\rf \in (0,1).$ We shall  be mainly interested   to the case  $1/2< \rf<1$, since  for $0<\rf\le1/2$        it is  possible to use    one channel  separately for  the function and for its derivative  \cite{SF}.
   Note that if $\rf$ is close to 1, the  frame  is close to a Riesz basis, while if $\rf$ is  small the frame is very redundant.
 \section{The system  for the recovery of missing samples} \label{s:Procedure}   
In this section  we briefly describe  the  method     for the recovery of  a finite number of missing samples via     two-channel   derivative oversampling.  We may rewrite     equation  (\ref{proiettata1}) as
 \begin{equation}\label{proiettata3}
f(x)=\sqrt{2\pi}\sum_{i=1}^2 \sum_{k\in Z} (-1)^{i-1}  f^{(i-1)}(k t_o) {\widetilde{\varphi_i}}(x-k t_o)  \end{equation}
and   computing the derivative of both sides,  we obtain  \begin{equation}\label{proiettata2}
f'(x)=\sqrt{2\pi}\sum_{i=1}^2 \sum_{k\in Z}(-1)^{i-1} f^{(i-1)}(k t_o) {\widetilde{\varphi_i}}'(x-k t_o).  \end{equation}
  Here 
    \begin{align}
  \widetilde{\varphi}_1^{\prime}(x) &= \frac{2\rf}{\sqrt{2\pi  } }   \frac{1}{x} \Big[  (1-\rf)
   \big(\cos( \omega x)  -\snc(\omega   x)\big ) + \label{derdualiferr} \\
  &   \hskip3truecm  + \rf\, \snc(\frac{\omega}{2} x)  \big(\cos(\frac{\omega x}{2}    )-\snc(\frac{\omega x}{2} )\big) \Big] \nonumber 
\\  
  \widetilde{\varphi}_2^{ \prime }(x) &=-   \frac{1}{\sqrt{2\pi}} \rf^2\, \snc(\frac{\omega x}{2} )\Big[ 2\cos( \frac{\omega x}{2} )- \snc( \frac{\omega x}{2} ))\Big]. \label{derfi12}   \end{align}
Observe that $  \widetilde{\varphi}_2^{ \prime }(0)=0$.\par\noindent
Let  $\cU=\{ l_1,l_2 \ldots,l_N\} \subset \Z$ and   let   $\{f(l_jt_o),\  f^{\prime}(l_jt_o) \ 1\le j\le N  \}$ 
be the corresponding     set of  missing samples. By evaluating  equations (\ref{proiettata3}) and (\ref{proiettata2})   in $\ell_k t_o$ and   separating the unknown samples from the known ones, we obtain
\begin{equation}   \begin{aligned}
 f(l_k t_ o) -\sqrt{2\pi}\sum_{i=1}^2\sum_{j=1 }^N (-1)^{i-1} f^{(i-1)}(l_jt_ o)\ \widetilde{\varphi_i}((  l_k -l_j)t_ o )&=c_k  \\
f'(l_k t_ o) -  \sqrt{2\pi}\sum_{i=1}^2
\sum_{j=1 }^N (-1)^{i-1} f^{(i-1)}(l_jt_ o)\widetilde{\varphi_i}{}^{\prime} ((  l_k -l_j)t_ o )\ 
&=c_{k +N},   
\end{aligned} \label{samp:4} \end{equation}
$1\le k\le N,$ where 
 \begin{equation} \begin{aligned} c_k&= 
 \sqrt{2\pi}\sum_{i=1}^2 \sum_{n\in \Z\setminus \cU}
 { {(-1)^{i-1}} } f^{(i-1)}(n t_o) {\tilde{\varphi_i}}( {l_k t_ o-nt_ o})
\\  c_{k+N}&=\sqrt{2\pi}\sum_{i=1}^2  
\sum_{n\in \Z\setminus \cU} { {(-1)^{i-1}} }f^{(i-1)}(n t_o) {\widetilde{\varphi_i}}
{}^{\prime} ( {l_k t_ o-nt_ o}).
 \end{aligned}\label{ckck}\end{equation}
Equations (\ref{samp:4}) form    a system of $2N$ equations in   the  $2N$ unknowns    
 \begin{equation}\label{XF}
  f(l_1 t_o), 
\ldots,f(l_N t_o), f^{\prime}(l_1 t_o),
 \ldots,f^{\prime}(l_N t_o) \end{equation}
which  can be written in matrix form:
 denote by    $\Sfi=\Sfi(\cU,\rf)$    the real matrix
\begin{equation}\label{Sfi} \Sfi=
\begin{bmatrix} S_{1 1}&  {\ }S_{1 2}{\ } \\  
  {\ }S_{2 1}{\ }  & {\ }S_{2 2}{\ }   \\  
   \end{bmatrix} 
\end{equation} 
where   $S_{11},S_{12},S_{21},S_{22}$ are the submatrices  whose entries are
\begin{align}
S_{11}(k,j)&= \sqrt{2\pi} \   \tilde{ \varphi_1}(( {l_k  -l_j)t_ o})
 \hskip.7truecm 
S_{1 2}(k,j)=- \sqrt{2\pi} \   \tilde{ \varphi_2} (( {l_k  -l_j)t_ o})\nonumber\\
{ \ }  \label{matrice}\\
S_{2 1}(k,j)&= \sqrt{2\pi}\  \tilde{ \varphi_1}^{\prime}(( {l_k  -l_j)t_ o})
 \hskip.7truecm 
S_{2 2}(k,j)=  - \sqrt{2\pi} \ \tilde{ \varphi_2}^{\prime}(( {l_k  -l_j)t_ o})
\nonumber
 \end{align}
$ k,j=1,\dots,N.$ Then the system (\ref{samp:4}) may be written \begin{equation} \label{sistema}(I -\Sfi)Z=\Cfi
\end{equation}
where  $I$ is  the $2N\times 2N$ identity  matrix,
    $\Cfi  =(c_k)_{k=1}^{2N}$   
is  defined by (\ref{ckck}) and $Z=\big(X_1,X_2,\ldots X_{N},Y_1,Y_2,\ldots Y_{N}\big)$. The unknowns  $X_k$ are the missing samples of the function and the $Y_k$ are the missing samples of   the derivative. \\  
Notice that, by (\ref{antif}), (\ref{derdualiferr}) and (\ref{derfi12}), if $\rf=1$ then  $\Sfi$ is the identity matrix. The four submatrices are real,  
 $S_{11}$ and $S_{22}$ are  symmetric,  while  $S_{1 2}$ and   $S_{ 2 1}$ are  antisymmetric.  Moreover, if the distance between two consecutive $l_k$ is constant, then the four submatrices  are Toeplitz, but $\Sfi $ is not Toeplitz. \\
Suppose that  in (\ref{XF})     only the values   
$\{X_i= f(l_i t_o), \quad i=1,\dots,N\} $
   are missing, while all the values of  $f'$ are known. By considering only the first $N$ equations  of the system (\ref{sistema})   and by separating the  known  from the  unknown   samples,  we obtain        
       \begin{equation}\nonumber
(I -S_{11})X= C_1+S_{12}Y, 
\end{equation}
 where $I $ is  the $ N\times  N$ identity matrix, $X=\bigl(X_k\bigr)_{k=1}^{N}$, 
  $Y=(f^{\prime}(l_k t_o) \big)_{k=1}^{N} $
and    $C_1=(c_k)_{k=1}^{N}$. This equation   may  be rewritten 
  \begin{equation}\label{solof}
(I -S_{11})X  =B_1    
\end{equation}
where $B_1=C_1+S_{12}Y$.   Similarly, if   in (\ref{XF})     only the values   
$\{Y_i= f'(l_i t_o), \, i=1,\dots,N\} $ are missing, and the samples of $f$ are known, one solves the system  \begin{equation}\label{solofp}
  (I -S_{22})Y =B_2 
\end{equation}
in the unknowns $Y_i,i=1,\dots,N$, where $C_2=(c_k)_{k=1+N}^{2N}$
 and $B_2=C_2+S_{21}X.$\\
If $\rf\in (0,1)$,   then
 the   eigenvalues of the symmetric matrices $S_{11}$    and 
$S_{22}$ are in the interval $(0,1)$ \cite{SF}.   
This  implies that the recovery of a 
finite number of missing samples of the function is possible,   if  all 
the samples   of  the derivative are known  and, vice-versa,  
samples of  the  derivative can be recovered, if  all the samples    of 
the function  are known.  In the next section we shall find bounds for the minimum and maximum eigenvalues of  the matrices $S_{11}$   and $S_{22}$. \\
To the best of our knowledge no results are known about the 
possibility of solving  system (\ref{sistema}), i.e. of recovering 
simultaneous missing samples of the function and of its derivative. 
Notice that the system is solvable if and only if $1$  is not an 
eigenvalue of the matrix $\Sfi$. On the basis of numerical evidence  we  conjecture that   all its eigenvalues are real and that they lie in 
the interval $(0,1)$ for all $\rf\in (0,1)$. In Section 4 we present several numerical 
experiments supporting our conjecture.
 
  \section{The stability of the  matrices.}
  This section is dedicated to the study of  the eigenvalues of the matrices $\Sfi$, $S_{11}$  and $S_{22} $ in the two-channel system (\ref{sistema}). 
First, we briefly summarize the one-channel case 
and  recall   some stability results  matrix obtained in \cite{F}. 
The   one-channel formula  is    \begin{equation}\label{rico2}f(x)=
\frac{ \omega t_o}{ { \pi}}  \sum_{n\in \Z}f(nt_o) \frac{\sin\omega(x-nt_o)}{\omega(x-nt_o)} \qquad \forall f\in \BL
   \end{equation} where  $t_o< \frac{ \pi}{\omega} $. 
In this case  the oversampling parameter is  $\rf= { \omega t_o}/  { \pi}    $. Let  $U=\{l_1,l_1,\dots,l_{N}\}$  be a set of    integer numbers and 
\begin{equation}\label{punti mancano}
\{ f(l_kt_o),k=1,\dots,N\}
\end{equation}   the set  of missing samples; then the system to be solved is 
  \begin{equation}\label{sistema:0}(I-\S)X=B\end{equation} where  $I$ is the identity $N\times N$ matrix  \begin{equation}\label{S1can}
 \S({j,k}) = \rf\,  \snc( \pi \rf ( l_j - l_k)),  \quad \quad j,k =1,\dots,N,
 \end{equation}   $X=\{ f(l_kt_o),k=1,\dots,N\}$, $B=(b_1,b_2,\dots,b_{N})$, and  
 \begin{equation}\label{B}b_j=\rf \sum_{k\notin U}  f(kt_o)\snc(\pi \rf (l_j-l_k)) \quad \quad j=1,\dots,N.\end{equation} 
The matrix $\S$ is  symmetric and positive definite. In  \cite{F} the author shows that    all its eigenvalues are in $(0,1) $  and  observes that there are two  situations  that lead to a ill-conditioned problem: when  $\rf$ is close to 1 and when the integers $l_k$ are   contiguous.   In the first  case, the   frame is close to a Riesz basis, when the recovery is impossible and   $\S$ becomes   the identity matrix.
In the second  case  the maximum eigenvalue of the matrix        
 $  \S $ grows rapidly with $N$; moreover it can be  close to    1 also for small    values of  $N$ (see Figure~5 in \cite{F}).  In both cases the  spectral condition number of the  matrix $I  -\S$, which controls the propagation of errors on the data, becomes very large.\par\noindent
Next, we investigate   the eigenvalues of the matrix  $\Sfi$ in the two channel case (see (\ref{Sfi}))  and  of its submatrices $S_{11} $ and $S_{22}$.  Since the latter are  real symmetric matrices, their eigenvalues give their  condition number.
We shall denote by  $\lfloor a\rfloor$  the maximum integer less than or equal to $a$.  
   Given a $N\times N$ matrix, we shall denote with $\lambda_j(A), j=1,\dots,N$ its eigenvalues and by $\lambda_{min}(A) $ and $\lambda_{max}(A) $ its  minimum and maximum eigenvalue.  \\
  Let  $\cU=\{ l_1,l_2 \ldots,l_N\} \subset \Z$ and   let   $\{f(l_jt_o),\  f^{\prime}(l_jt_o) \ 1\le j\le N \}$ 
  a set of  missing samples. Then 
   \begin{align}\nonumber
   \lambda_{min}(S_{11})&<2\rf-\rf^2 <\lambda_{max}(S_{11}) \\
     \lambda_{min}(S_{22})&< \rf^2 <\lambda_{max}(S_{22}).\nonumber\end{align}   
 where $\rf$ is the oversampling parameter (\ref{erreffe}).
Indeed, by (\ref{matrice}) and (\ref{antif}), the trace of $S_{11}$ is $N(2\rf-\rf^2)$; by (\ref{matrice}) and (\ref{derfi12}) the trace of $S_{22}$ is $N\rf^2$. 
Moreover, since the  entries of the matrix   $\Sfi $ are real, its  eigenvalues are complex conjugates, thus its trace is $N(2\rf-\rf^2 )+N\rf^2=\sum_{i=1}^{2N}   Re( \lambda_i (\Sfi))$; hence 
   \begin{equation}\label{rsepara:1}
  Re( \lambda_{min}(\Sfi))<\rf <Re(\lambda_{max}(\Sfi)). \end{equation}   
\noindent We observe that, if the oversampling parameter $\rf$ tends to 1, i.e. the frame tends to a Riesz basis, then  $\Sfi$ tends to the identity matrix.\par
 Following Ferreira,     we shall now   consider the  case in  which 
the  set    that locates the  positions 
 of  the missing samples  is 
  $\cU= \{ m \,i_1,m\,i_2,\dots,m\, i_N\} ,$    where  $m$ is an integer   and  
$\cI=\{ \,i_1,\,i_2,\dots\, i_N\} $ is a set of  integers;    in the  following  we shall denote  such sets        
by $m\cI$.   The interest  for studying these cases lies in the   
technique  of  interleaving    the samples of a  
signal, prior to their transmission or archival;  the advantage of this 
procedure is that   the transmitted (or stored)  information becomes 
less sensitive to the burst errors that typically affect  contiguous  set 
of samples (\cite{F}). We shall investigate how the stability of the method   depends on the  interleaving factor $m$. 
  In   Proposition \ref{mrinteger} and  
   Theorem  \ref{eigS11} 
    below    we  find  estimates for the eigenvalues of the matrices     $S_{11}, S_{22},$
    thus    generalizing   a result of Ferreira  for one channel    \cite[Theorem~1]{F}. \par\noindent
First we   consider the case  $ m r$ integer.  The following proposition, which  extends  to two channels   a result in \cite{F}, shows 
that  in this case   the matrix $\Sfi$ is lower  triangular and that $N$ eigenvalues are equal to $2\rf-\rf^2$ and $N$ are equal to  $\rf^2.$ Hence   if $\rf$ is   rational $\rf=p/q,$ 
one can take $m$ equal to a multiple of $q$   and obtain a  lower-triangular 
matrix $\Sfi$.     
 \begin {proposition}\label{mrinteger}
 Let $m$ be a positive integer; suppose   $\cU=\{m\,i_1,m\,i_2,\cdots,m\, i_N\}$, where $i_j,j=1,\dots,N$  are  integers and let $\rf$ be   a real number in  $(0,1).$ If   $ m\rf $ is an integer,  then     $$S_{11}= (2\rf-\rf^2)I \qquad  S_{22}=\rf^2 \, I   \qquad S_{12}=0.$$
  Moreover   the entries of the matrix $S_{21}$  are 
$$ 
  S_{21}(k,j) =  \begin{cases} 
(1-\rf)  {\omega}/{(m \pi (i_k-i_j))} 
  &\mbox{if} \quad {k\not=j}  \\   
   0 &\mbox {if}\quad  {k=j} .
  \end{cases}
  $$ 
 \end{proposition}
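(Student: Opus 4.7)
The plan is to reduce the entire statement to a direct computation, exploiting the observation that $l_k-l_j=m(i_k-i_j)$, so every matrix entry in (\ref{matrice}) is evaluated at $x=m(i_k-i_j)t_o$. Since $\omega t_o=2\pi\rf$, we have $\omega x=2\pi\rf m(i_k-i_j)$ and $\omega x/2=\pi\rf m(i_k-i_j)$. Under the hypothesis that $m\rf$ is an integer, for any $k\neq j$ the quantity $\rf m(i_k-i_j)$ is a nonzero integer, so $\omega x$ is a nonzero multiple of $2\pi$ and $\omega x/2$ is a nonzero multiple of $\pi$. This gives the three facts that will make everything collapse: $\sin(\omega x)=0$, $\cos(\omega x)=1$, and $\sin(\omega x/2)=0$ (hence $\snc(\omega x)=0$, $\snc(\omega x/2)=0$, $\snc^2(\omega x/2)=0$).

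First I would handle the off-diagonal entries ($k\neq j$). Plugging the above vanishings into (\ref{antif}) immediately yields $\tilde{\varphi}_1((l_k-l_j)t_o)=0$ and $\tilde{\varphi}_2((l_k-l_j)t_o)=0$, and into (\ref{derfi12}) yields $\tilde{\varphi}_2'((l_k-l_j)t_o)=0$. In the formula (\ref{derdualiferr}) for $\tilde{\varphi}_1'$, the second bracket vanishes entirely (it contains a factor $\snc(\omega x/2)$), while the first bracket reduces to $\cos(\omega x)-\snc(\omega x)=1-0=1$. Hence
\[
\tilde{\varphi}_1'(m(i_k-i_j)t_o)=\frac{2\rf(1-\rf)}{\sqrt{2\pi}\,m(i_k-i_j)t_o},
\]
and multiplying by $\sqrt{2\pi}$ and substituting $t_o=2\pi\rf/\omega$ gives exactly $S_{21}(k,j)=(1-\rf)\omega/(m\pi(i_k-i_j))$.

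Second, I would handle the diagonal ($k=j$, i.e.\ $x=0$) using the known values at the origin: $\snc(0)=\snc^2(0)=1$ and $\cos(0)=1$. From (\ref{antif}), $\tilde{\varphi}_1(0)=\frac{1}{\sqrt{2\pi}}(2\rf(1-\rf)+\rf^2)=\frac{1}{\sqrt{2\pi}}(2\rf-\rf^2)$ and $\tilde{\varphi}_2(0)=0$; from (\ref{derfi12}), $\tilde{\varphi}_2'(0)=-\frac{\rf^2}{\sqrt{2\pi}}(2-1)=-\frac{\rf^2}{\sqrt{2\pi}}$; and $\tilde{\varphi}_1'(0)=0$ since $\tilde{\varphi}_1$ is an even function. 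Multiplying by the normalization $\sqrt{2\pi}$ in (\ref{matrice}) gives the claimed diagonal values $2\rf-\rf^2$, $0$, $\rf^2$, $0$ for $S_{11},S_{12},S_{22},S_{21}$ respectively.

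There is no real obstacle here; the only point that requires a moment of care is evaluating $\tilde{\varphi}_1'$ off the diagonal, because (\ref{derdualiferr}) displays an apparent $1/x$ singularity — but the two cancellations arising from $\rf m(i_k-i_j)\in\Z$ eliminate every term except the constant $1$ coming from $\cos(\omega x)$, so the division by $x=m(i_k-i_j)t_o$ is harmless and produces the stated rational expression. Combining the diagonal and off-diagonal computations yields the four identities in the proposition.
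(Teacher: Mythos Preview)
Your proposal is correct and follows essentially the same approach as the paper's own proof: both arguments evaluate the entries of the four blocks at $x=m(i_k-i_j)t_o$, use $\omega t_o=2\pi\rf$ together with the integrality of $m\rf$ to make the sinc factors in (\ref{antif}), (\ref{derfi12}) and the second bracket of (\ref{derdualiferr}) vanish for $k\neq j$, and read off the diagonal values at $x=0$. Your write-up is in fact slightly more explicit than the paper's (you spell out the trigonometric vanishings and justify $\tilde{\varphi}_1'(0)=0$ via evenness), but the underlying computation is identical.
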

 \begin{proof}
Since $t_o=2\pi\rf/\omega$, by  (\ref{matrice})   with $\ell_k=mi_k$, one obtains that 
\begin{equation}\nonumber
S_{11}(k.j)=\sqrt{2\pi} \widetilde{\varphi}_1\bigl(2 \pi m\frac{\rf}  {\omega}(  i_k  -i_j) \bigr);
\end{equation}  
hence  by  (\ref{antif})  since  $m\rf$ is an integer 
\begin{equation}\nonumber
S_{11}(k,j) 
  = (2\rf \, (1-\rf) + \rf ^2)
\delta_{j,k}\qquad 1\le k\le N, \quad 1\le j\le N 
\end{equation}
   where $\delta_{j,k}$ is the Kronecker symbol. This shows that   $S_{11}=(2 \rf-\rf^2)I_N.$
    Similarly, by using (\ref{matrice})  and (\ref{derfi12}), we obtain that
$S_{22}=\rf^2 I $  and  $S_{12}=0 $. From (\ref{derdualiferr}) 
 one finds that  the off-diagonal entries of the matrix $S_{21}$ are $$S_{21}(k,j)= (1-\rf) \frac{\omega}{m \pi (i_k-i_j)} \quad k\not=j.$$
The diagonal entries in  $S_{21}$ are  equal to zero, since $ {\widetilde{\varphi}_1 }^{\prime}(0)=0.$   \end{proof}
In the  following   theorem  we find  estimates for  the minimum and maximum eigenvalues  of the matrix $S_{11}$ and $S_{22}.$   The estimates  do not depend on the number $N$ of missing samples and  are simple to compute.
 \begin {theorem}\label{eigS11} Suppose  that  {   $0<\rf <1$ } and $\cU=m\cI$,  where $\cI=\{\,i_1,\,i_2,\cdots,\, i_N\}$ and     $m$ is  a positive integer.  If $ m \,\rf$   is not   integer,   then for $j=1,\dots,N$
 \begin{align} 
 \minboundu &< \lambda_j(S_{11})< \maxboundu   
 \label{S11eig}\\
\minboundd &< \lambda_j(S_{22})< \maxboundd
 \label{S22eig} 
 \end{align}
 where 
  \begin{align*} 
 \minboundu&=\frac {D}{m}(1-\frac{D}{4m}-\frac{1}{4m }) ,  
\qquad \maxboundu=\minboundu+\frac{1}{m} \\
  \minboundd &=   \frac{D }{4m^2}(D-1) , \qquad  \qquad \   \ 
 \maxboundd = \frac {D }{4m^2 }(D+1)+\frac{\rf} {m}  \end{align*}
 and $D=\lfloor 2m \rf \rfloor$.
 \end{theorem}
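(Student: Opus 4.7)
I would approach the proof through the Toeplitz-spectrum machinery. Under the hypothesis $\cU=m\cI$, each of $S_{11},S_{22}$ is the $N\times N$ principal submatrix, indexed by $\cI=\{i_1,\ldots,i_N\}$, of an infinite Hermitian Toeplitz operator $T_{ii}$ on $\ell^2(\Z)$; its generating sequences are $\sqrt{2\pi}\,\widetilde{\varphi}_1(nmt_o)$ and $-\sqrt{2\pi}\,\widetilde{\varphi}_2'(nmt_o)$. Starting from the Fourier-side expressions (\ref{dualgen}) and from $\what{-\widetilde{\varphi}_2'}(\xi)=(\rf^2/\omega^2)|\xi|\chi_{[-\omega,\omega]}(\xi)$, Poisson summation gives the real-valued symbols (with $\tau=\theta/(2\pi)$)
\begin{align*}
G_{11}(\theta)=\frac{1}{m}\sum_{\{k\in\Z:\,|k+\tau|\le m\rf\}}\!\Bigl(1-\frac{|k+\tau|}{m}\Bigr),\qquad G_{22}(\theta)=\frac{1}{m^2}\sum_{\{k\in\Z:\,|k+\tau|\le m\rf\}}\!|k+\tau|.
\end{align*}

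Associating to any $x\in\C^N$ the trigonometric polynomial $P(\theta)=\sum_{k=1}^N x_k e^{-i\,i_k\theta}$, a direct computation gives $\langle S_{ii}x,x\rangle=(2\pi)^{-1}\int_0^{2\pi}|P(\theta)|^2\,G_{ii}(\theta)\,d\theta$ and $\|x\|^2=(2\pi)^{-1}\int_0^{2\pi}|P(\theta)|^2\,d\theta$. Hence every eigenvalue $\lambda_j(S_{ii})$ is a Rayleigh quotient of $G_{ii}$ against the density $|P|^2$, so it lies in $[\inf_\theta G_{ii},\sup_\theta G_{ii}]$. The hypothesis $m\rf\notin\Z$ makes $G_{ii}$ continuous and nonconstant, while $|P|^2$, as a nontrivial trigonometric polynomial, cannot vanish on a set of positive measure; together these two facts promote both inclusions to \emph{strict} inequalities.

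The only remaining task is to bound $\inf G_{ii}$ and $\sup G_{ii}$ by the quantities in the statement. Writing $2m\rf=D+\eta$ with $\eta\in(0,1)$ and letting
\[
N(\tau)=\#\{k\in\Z:|k+\tau|\le m\rf\},\qquad S(\tau)=\sum_{|k+\tau|\le m\rf}|k+\tau|,
\]
so that $G_{11}=N/m-S/m^2$ and $G_{22}=S/m^2$, one checks first that $N(\tau)\in\{D,D+1\}$ and then computes the extrema of $S(\tau)$ by elementary means. Its minimum is realized when the points $\{k+\tau\}_{k\in\Z}$ sit symmetrically about $0$ (yielding $(D^2-1)/4$ for odd $D$ and $D^2/4$ for even $D$), and its maximum when one of them is pushed to $\pm m\rf$ (yielding $(D+1)^2/4$ for odd $D$ and $D(D+2)/4+\eta/2$ for even $D$). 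Feeding these back into the formulas for $G_{11},G_{22}$ produces $\minboundu\le\inf G_{11}$, $\sup G_{11}\le\maxboundu$, $\minboundd\le\inf G_{22}$, and $\sup G_{22}\le\maxboundd$, which combined with the Rayleigh-quotient step of the previous paragraph give (\ref{S11eig})--(\ref{S22eig}).

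The main technical burden is the case-by-case verification of the extrema of $S(\tau)$: the optimizing configuration splits on the parity of $D$ and on whether the value $N(\tau)=D$ or $N(\tau)=D+1$ is realized at the extremum. A further subtlety is that in the low-$D$ range (notably $D\in\{0,1\}$) some of the stated bounds $\minboundu,\maxboundu,\minboundd,\maxboundd$ actually coincide with the sup or inf of the symbol, so strictness there must be inherited from the Rayleigh-quotient argument rather than extracted at the symbol level. Once these elementary but finicky comparisons have been carried out, the clean closed-form intervals asserted by the theorem follow.
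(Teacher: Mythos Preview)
Your approach via the Toeplitz symbol is correct and closely related to the paper's, but the bookkeeping is organized differently. One small slip: the claim that $m\rf\notin\Z$ makes $G_{ii}$ \emph{continuous} is not right in general---$G_{11}$ has jump discontinuities of size $(1-\rf)/m$ (and $G_{22}$ of size $\rf/m$) whenever $2m\rf\notin\Z$, since a term entering or leaving the sum carries the boundary value $(1-\rf)/m$ (resp.\ $\rf/m$) and the entering/leaving events do not coincide. This does not break your argument: the strict Rayleigh inequality only needs that $G_{ii}$ is \emph{nonconstant} and that $|P|^2>0$ a.e., both of which do hold under the hypothesis.

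The paper reaches the same integral representation of the quadratic form but never passes to the symbol. It keeps the integral over $[0,\rf]$ in the form
\[
x^*S_{11}x=\int_0^{\rf}(1-y)\,M(y)\,dy,\qquad M(y)=|P(y)|^2+|P(-y)|^2,
\]
splits $[0,\rf]$ into the $D$ subintervals $[k/(2m),(k+1)/(2m)]$ plus the remainder $[D/(2m),\rf]$, and bounds $1-y$ by its extremes on each piece. The key observation is that the $1/m$-periodicity of $P$ forces $\int M=1/m$ over every full subinterval and $<1/m$ over the remainder; the stated bounds then drop out of the arithmetic sum $\frac{1}{m}\sum_{k}(1-\tfrac{k}{2m})$ with no parity or cardinality casework. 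In effect the two approaches are dual: you fold the multiplier $(1-|y|)\chi_{[-\rf,\rf]}$ onto one period of $|P|^2$ and analyze the resulting symbol, while the paper folds the analysis of $|P|^2$ onto consecutive half-periods and keeps the multiplier intact. Your route is more structural---it names the Toeplitz symbol explicitly and would generalize cleanly---but pays for this with the parity/$N(\tau)$ casework you flag at the end; the paper's splitting is more elementary and sidesteps that casework entirely.
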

 \begin{proof}  
From  (\ref{dualgen}), by Fourier inversion and a  change of  variable, we obtain
\begin{equation}\label{fi1tilde}
\widetilde{\varphi}_1(x)=\frac{1}{\sqrt{2\pi}}\int_{-\rf}^{\rf} (1-|y|) e^{i x y  {\omega}/{\rf}  } dy.
\end{equation}
Hence by (\ref{matrice})  with $l_k=m i_k$,
$$S_{11}(k,j)=
\int_{-\rf}^{\rf} (1-|y|) e^{2\pi i m y(  i_k  -i_j)} dy.$$
Let $x=(x_o,x_1,\dots,x_{N-1})$ be a column-vector in $  \C^N$  and denote  by $x^*$ its conjugate transpose.  Then \begin{align}\nonumber
x^*S_{11} x=\sum_{k,j=0}^{N-1} \overline{x}_k   S_{11}(k,j)\, x_j&=\int_{-\rf}^{\rf} (1-|y|) \sum_{k,j=0}^{N-1} \overline{x}_k  x_j  e^{2\pi i m y(  i_k  -i_j)} dy\\ \nonumber
&=   \int_{-\rf}^{\rf} (1-|y|)|\PP(y)|^2 \ dy ,\end{align}
 where  $\PP(y)$ is the $1/m$-periodic function
\begin{equation}\label{poly} \PP(y)=\sum_{k=0}^{N-1}x_k  e^{-2\pi i m y  i_k  } .\end{equation}
 By splitting the integral in two parts and changing variable   the left hand side of the above equation may be written
 \begin{equation}\label{xSx}x^*S_{11} x=\int_0^{\rf}(1-y)  M(y)\ dy\end{equation}
 where we have set \begin{equation}\label{M}M(y)=|\PP(y)|^2 +|\PP(-y)|^2 .\end{equation}
Next  we write the integral  in (\ref{xSx}) as a sum of $D+1$ integrals   \begin{equation}\label{xSx:1}
 x^*S_{11} x  =\sum_{k=0}^{D-1}\int_{k/2m}^{(k+1)/2m}(1-y)  M(y)\ dy+\int_{D/(2m)}^{\rf}(1-y)  M(y)\ dy. \end{equation} 
 Since  $M\ge 0$ we majorize  $1-y$ in each integral and  obtain the estimate 
 \begin{equation}\label{ineq:1}
 x^*S_{11} x   \le    \sum_{k=0}^{D-1}(1-\frac{k}{2m})\ \int_{k/2m}^{(k+1)/2m}   M(y)\ dy+(1-\frac{D}{2m})\int_{D/(2m)}^{\rf}   M(y)\ dy.
 \end{equation}
Next we prove that for all $k=0,\dots,D-1$
 \begin{equation}\label{eq:1}
  \int_{k/2m}^{(k+1)/2m}   M(y)\ dy=\frac{1}{m}. \end{equation}
  Indeed by (\ref{M}), by changing variables  and using the $(1/m)$-periodicity of $\PP,$
\begin{align}\nonumber  \int_{k/2m}^{(k+1)/2m}   M(y)\ dy&= \int_{k/2m}^{(k+1)/2m}   |\PP(y)|^2 \ dy +\int_{-(k+1)/2m}^{-k/2m}  |\PP(y)|^2 \ dy\nonumber \\
&=\int_{k/2m}^{(k+1)/2m}   |\PP(y)|^2 \ dy +\int_{(k+1)/2m}^{( k +2) /2m}  |\PP(y)|^2 \ dy\nonumber \\
&=\int_{k/2m}^{( k +2) /2m}   |\PP(y)|^2 \ dy=\int_{0}^{  1/m}   |\PP(y)|^2 \ dy.\nonumber 
 \end{align}
By (\ref{poly}) and the Plancherel formula, since  $x$ has norm equal to 1, we have 
 \begin{equation}\label{normfi} \int_0^{1/m} |\PP(y)|^2 \ dy=\frac{1}{m}.\end{equation} This concludes the proof of equation  (\ref{eq:1}).  Next we prove that 
 \begin{equation}\label{eq:2}
  \int_{D/(2m)}^{\rf}   M(y)\ dy<\frac{1}{m}.\end{equation}
Indeed, from  (\ref{M}), by  changing variable in  the integral and using the $1/m  $-periodicity of $\PP$,   we obtain
\begin{align*} \int_{D/(2m)}^{\rf}   M(y)\ dy&= \int_{D/(2m)}^{\rf}   |\PP(y)|^2\ dy+ \int_{D/(2m)}^{\rf}   |\PP(-y)|^2\ dy\\ &=\int_{D/(2m)}^{\rf}   |\PP(y)|^2\ dy+ \int_{D/m-\rf}^{D/(2m)}   |\PP(y)|^2\ dy\\& = \int_{D/m-\rf}^{\rf}  |\PP(y)|^2 \ dy.
 \end{align*}
The length $2\rf-D/m$ of the interval of integration   is less than  the period  $1/m$ of $\PP,$   thus   inequality (\ref{eq:2})  follows  from (\ref{normfi}). \\
From (\ref{ineq:1}), by using (\ref{eq:1}) and (\ref{eq:2}),  we obtain
 \begin{equation}\label{ineq:2}
 x^*S_{11} x   \le  \frac{1}{m}  \Big[\sum_{k=0}^{D-1}(1-\frac{k}{2m})\  +(1-\frac{D}{2m}) \Big]= \frac{D}{m}\big(1  -\frac{D}{4m}-\frac{1}{4m }\big)+\frac{1}{m} 
 \end{equation}
Thus we have proved  the upper bound for the eigenvalues of  $S_{11}.$
From  (\ref{xSx:1}), by  observing that the second integral is  positive    and  using  (\ref{eq:1})    we obtain
$$ x^*S_{11} x  \ge \sum_{k=0}^{D-1}\int_{k/2m}^{(k+1)/2m}(1-y)  M(y)\ dy >  \frac{1}{m}\sum_{k=0}^{D-1} \big(1-\frac{k+1}{2m}\big)=  \frac{D}{m}\big(1-\frac{D}{4m}-\frac{1}{4m}\big).$$
This proves  the lower bound  and concludes  the proof of inequality (\ref{S11eig}). The omit the  proof of inequality  (\ref{S22eig}) which is similar.
\end{proof}
  \begin{table}
\begin{tabular}{|l|c|c|c|c|c}\hline
 $\rf$&$\minboundu$ &\, $\lambda_{min}(S_{11})$&$\lambda_{max}(S_{11})$&  $\maxboundu $         \\  \hline	
 0.55 &0.719 &0.768 &0.811   &  0.844  \\
 .6& 0.773 &	0.813 	&0.859 	&0.898 \\ 
 .7& 0.859 	&0.903  	&0.926 	&0.984   \\
 .8 &0.891	&0.946	&0.967 	&1.016 \\
 .9 &  0.929 	&0.984  &	0.998 &	1.055  \\ 
.95& 0.936	&0.996	&0.999	&1.063  \\  
	 \hline  \end{tabular}
  \caption{$  \lambda_{min}  (S_{11}), \, \lambda_{max} (S_{11})$ and their estimates for $\cU=\{0,8,16,24\}$, $m=8$  and several  values of $\rf$.}
  \label{tavolaS11}
\end{table}
 \begin{table}\begin{tabular}{|l|c|c|c|c|c}\hline
 $\rf$&$\minboundd $&\, $\lambda_{min}(S_{22})$&$\lambda_{max}(S_{22})$&$\maxboundd$      \\  \hline	
.55&	0.219 &	0.271 &	0.315 &	0.350 \\
.6&	0.281&	0.317& 	0.391 &	0.427 \\
.7 	&0.430 &	0.470&	{0.535}&{0.603}\\
.8 	& 0.516 &	0.594&{0.659}&	{0.709}\\
.9&	0.711&	0.766&	0.871&	0.932\\
.95	&0.820 &	0.877&	0.962 	&1.056\\
	 \hline  \end{tabular}
  \caption{$  \lambda_{min}  (S_{22}), \, \lambda_{max} (S_{22})$ and their estimates  for  $\cU=\{0,8,16,24\}$    and several  values of $\rf$.}
\label{tavolaS22}\end{table}
 In   Table~\ref{tavolaS11}   and Table~\ref{tavolaS22} we   compare the        minimum and maximum eigenvalues  of  the matrices  $S_{11}$ and  $S_{22}$  with their  estimates  given  by Theorem  \ref{eigS11}  
  for  $\cU=\{0,8,16,24\}$ $m=8$  and various values of $\rf.$  
From  Proposition \ref{mrinteger} one  can see   that, if $  m\rf$ is integer,  then the  eigenvalues of $S_{22}$ are  smaller than the eigenvalues of $S_{11}$.   
The following corollary  shows   that this is also  true  when 
 $m\rf$ is not an integer, provided that  $m$ is sufficiently large.  
 \begin {corollary}
 Let $m$ be a positive integer, $\cU=m\cI$ and $\cI=\{\,i_1,\,i_2,\cdots,\, i_N\}$,   $0<\rf<1.$       If $m \rf$  is not an integer and $m> (1+2\rf)/(2\rf (1-\rf)),$ then  \begin{equation}\label{max}
   \lambda_{max}(S_{22})<\lambda_{min}(S_{11}).\end{equation}   
 \end{corollary}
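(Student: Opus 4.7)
The plan is to apply the estimates of Theorem~\ref{eigS11} directly. Since that theorem gives $\lambda_{\max}(S_{22}) < \maxboundd$ and $\lambda_{\min}(S_{11}) > \minboundu$, it suffices to establish the deterministic inequality $\maxboundd \le \minboundu$ under the stated hypothesis on $m$. Substituting the expressions for $\minboundu$ and $\maxboundd$ and clearing the common factor $4m^2$, the inequality to prove reduces, with $D=\lfloor 2m\rf\rfloor$, to
$$ D(D+1) \le 2m(D-\rf). $$

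To remove the dependence on $D$, I would replace $D$ by bounds from its admissible range $D\in(2m\rf-1,\,2m\rf]$. Since the left-hand side is increasing in $D$, the estimate $D\le 2m\rf$ gives the upper bound $2m\rf(2m\rf+1)$; since the right-hand side is also increasing in $D$, the estimate $D>2m\rf-1$ gives the lower bound $2m(2m\rf-1-\rf)$. A sufficient condition is therefore
$$ 2m\rf(2m\rf+1) \le 2m(2m\rf-1-\rf), $$
which, after dividing by $2m$ and collecting, becomes $2m\rf^2 - 2m\rf + 2\rf + 1 \le 0$, i.e. $1 + 2\rf \le 2m\rf(1-\rf)$, which is exactly the hypothesis of the corollary.

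The only mildly delicate point is to keep track of strict versus non-strict inequalities: the bounds from Theorem~\ref{eigS11} are strict, so $\maxboundd \le \minboundu$ already yields the strict conclusion $\lambda_{\max}(S_{22}) < \lambda_{\min}(S_{11})$; additionally, the assumption $m > (1+2\rf)/(2\rf(1-\rf))$ is strict, which ensures $\maxboundd < \minboundu$ even in the boundary case where $2m\rf$ happens to be an integer. I do not anticipate a real obstacle here; the argument is purely algebraic once the two estimates of Theorem~\ref{eigS11} are invoked, and the main work is simply to pick the correct side on which to apply each of the two bounds $D\le 2m\rf$ and $D>2m\rf-1$.
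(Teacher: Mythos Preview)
Your argument is correct and follows essentially the same route as the paper. The paper also reduces to showing $\maxboundd < \minboundu$, rewrites this as $D^2/(2m) + D/(2m) + \rf - D < 0$ (equivalent to your $D(D+1) \le 2m(D-\rf)$ after multiplying through), and then bounds the $D$-terms using $2m\rf - 1 < D \le 2m\rf$ to arrive at exactly the same sufficient condition $2m\rf(\rf-1) + 2\rf + 1 < 0$; your treatment of the strict/non-strict inequalities is slightly more explicit than the paper's, but the substance is identical.
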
  
  \begin {proof} By (\ref{S11eig}) and  (\ref{S22eig})  it is sufficient  to  prove that      if   $m> (1+2\rf)/(2\rf (1-\rf)),$   then $ \maxboundd< \minboundu.
$  This  inequality is equivalent to
 $  {D^2}/{2m} +{D }/{2m}+\rf-D<0.$ 
 Since  $2mr-1<D\le2mr,$ we have
 $$  \frac{D^2}{2m} +\frac{D }{2m}+\rf-D<2m\rf(\rf-1)+2\rf+1 $$ 
from which   the corollary follows.
  \end{proof}
Numerical experiments    show that for   $m<   (1+2\rf)/(2\rf (1-\rf) $ the  maximum eigenvalue of $S_{22}$ may be  larger that  the minimum eigenvalue  of $S_{11}$.\par
We  shall now describe   the behavior of the eigenvalues of the matrix $\Sfi$ in dependence of the parameters  $\rf $ and $N$. The numerous    experiments that we have performed  suggest the conjecture that  the  eigenvalues of this matrix are real,   positive, and  less than 1.   In  what follows   we have ordered the  eigenvalues  so that $\lambda_i<\lambda_{i+1}$ for $i=0,\dots,N-2$. 
 \begin{figure}[h]
\begin{center}
\centering\setlength{\captionmargin}{0pt}%
 \includegraphics[width=8.5cm,height=7cm]{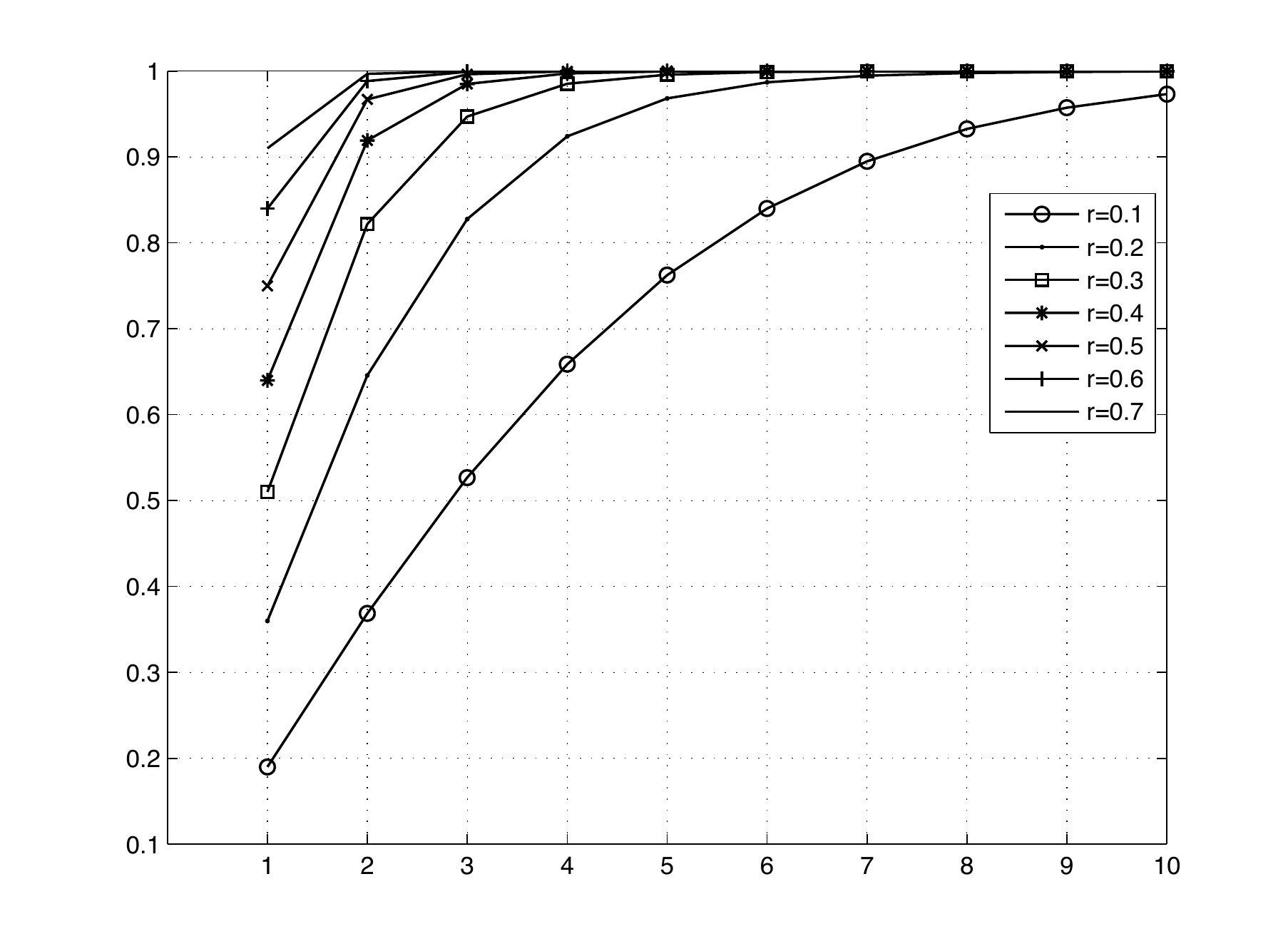}
 \end{center}
  \vskip-.7truecm
 \caption{ \small{Largest eigenvalues of $\Sfi$ as   function of $N$  for   $\cI=\{0,1,\cdots,N-1\}$, for  various values of $\rf$. }}
\label{versusN}
\end{figure}
 Figure  ~\ref{versusN}    shows the  largest  eigenvalue of $\Sfi$ as 
function of the  number $N$ of contiguous points,      for various 
values of $\rf$.  Note that,   as $\rf$ approaches 1,  the largest  eigenvalue 
gets close to 1, i.e.  the smallest eigenvalue of    the matrix $I-\Sfi$ 
tends to zero.   This happens   even for a small number of missing samples, as for   the one-channel case (see  
\cite{F}, Figure 5). Thus, since the spectral condition number of a matrix  is  greater 
or equal  to the ratio of the absolute values of the largest and the 
smallest eigenvalues,     $\textrm{cond}(I-\Sfi)$  becomes very 
large.  
\noindent In Table~\ref{condIMS}  we show the spectral condition numbers of  $I-\Sfi $ for   $\cU=\{0,1,2, \cdots, 9\}$,   for various values of $\rf$.   
  \begin{table} \begin{tabular}{|c|c|c|c | }\hline
 \small{$ \rf $}&\small{$\textrm{cond}(I-S) $} &\small{$\rf$}   &
\small{$\textrm{cond}(I-S)   $ }
    \\  \hline
.1&	8.571\,e+01   &.6 	&2.829\,e+13	\\
 .2&	5.870\,e+04	&.7 	& 1.661\,e+16 	\\	
 .3&	6.187\,e+05 &	.8&	2.474\,e+17	\\
.4&1.133\,e+08 &.9 	&4.096\,e+17	 \\
.5&3.513\,e+10 &1	&	  \\
	 \hline  \end{tabular}
  \caption{ Condition number of  $I-\Sfi$    for  $\cU=\{0,1,2,\dots, 9\}$     and   $\rf=0.1,0.2,\dots,1$.}
\label{condIMS}\end{table} 
\begin{figure}[h]
\begin{center}
\begin{tabular}{ l r} 
\hskip-1truecm
\includegraphics[width=7cm]{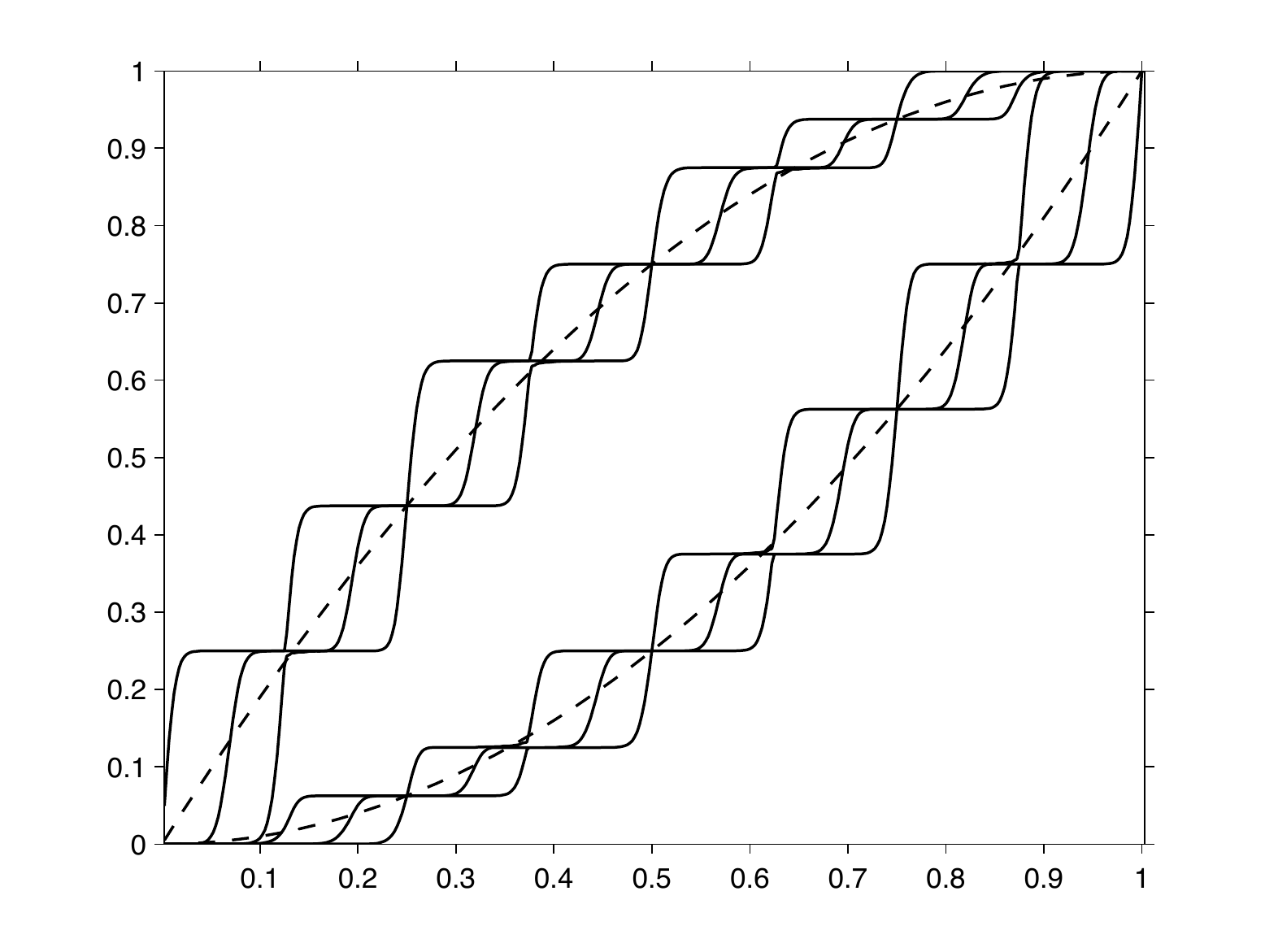}
\includegraphics[width=7cm]{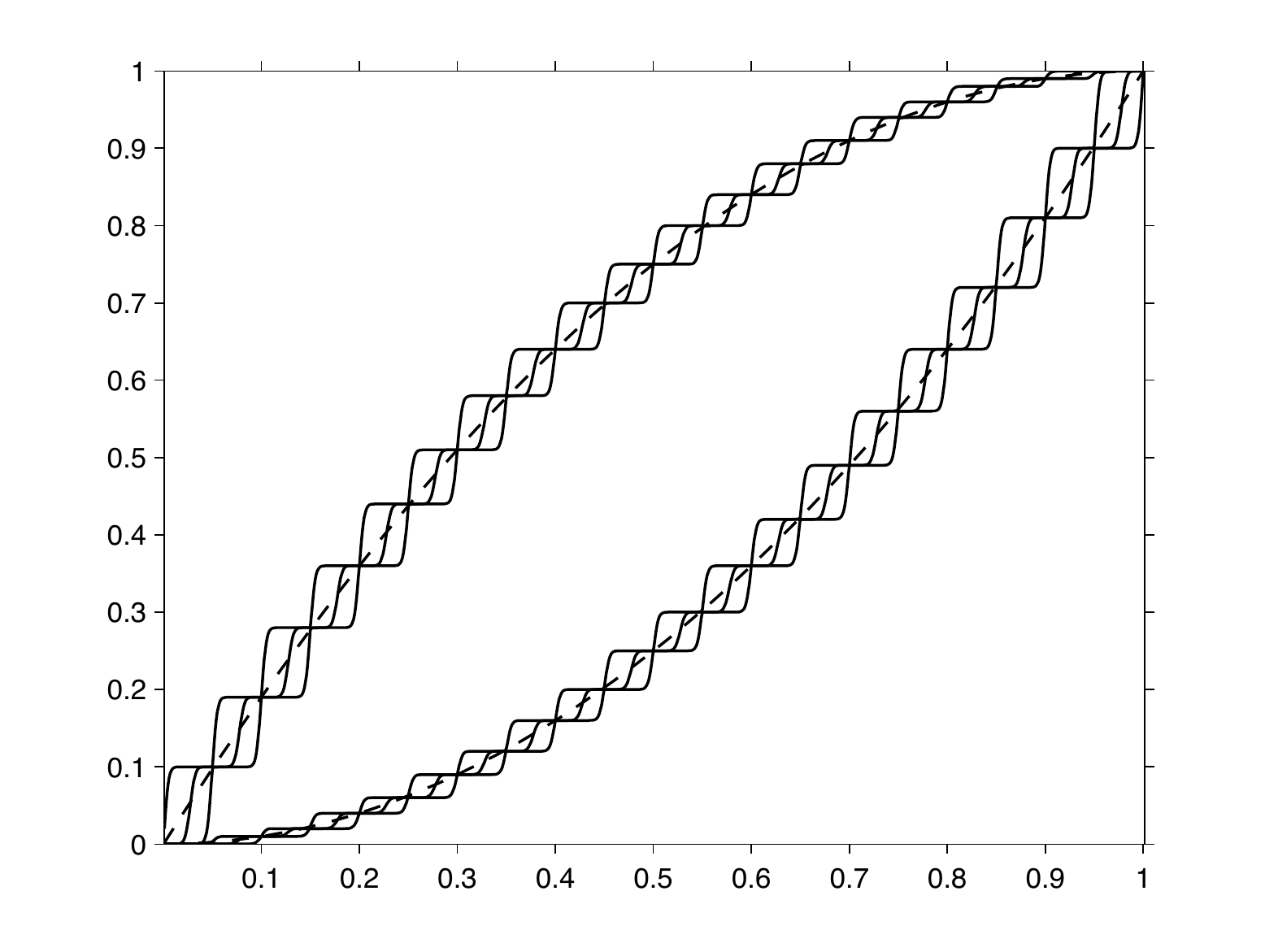} \end{tabular}
\end{center} 
 \vskip-.5truecm
\caption{ \small{Six eigenvalues  of $\Sfi$
  as   functions of $\rf$ for $\cU=\{0,1,2,3,\dots,9\}$, $m=4$ (left) and $m=10$ (right). Dashed lines: parabolas $r^2$ and $2r-r^2$}}
\label{versusr}
\end{figure}
\par \noindent Figure~\ref{versusr}     shows  some of      the eigenvalues of $\Sfi
$,     namely $\lambda_{i},i=1,5,10,11,15,20$,   as functions of  the 
parameter $r$   for $\cI=\{0,1,2,3,\cdots,9\}$,   $m=6$ (left) and $m=10$ (right).   
The dashed lines are the parabolas $2\rf- \rf^2$ and $\rf^2$.  One 
can see that  for larger values of $m$ the   graphs of the 
eigenvalues  concentrate around the graphs of the two parabolas.  
We observe that  in the  one-channel case the eigenvalues   have a 
similar behavior (see Figure~7 in \cite{F}).     
 \begin{figure}[h]
\begin{center}
\centering\setlength{\captionmargin}{0pt} 
 \includegraphics[width=6.5cm,height=5cm]{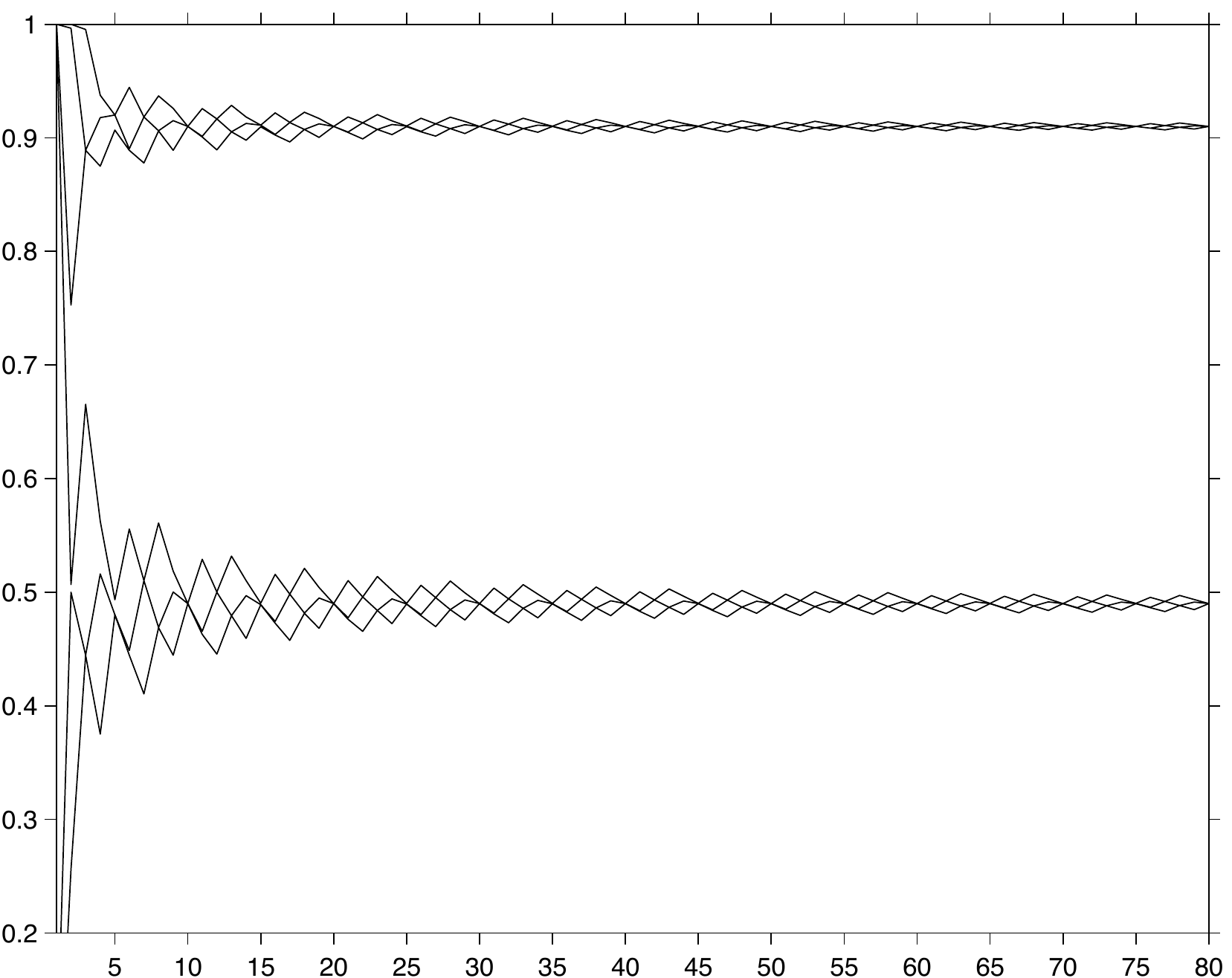}
  \end{center}
  \vskip-0.1truecm
  \caption{ Some of the eigenvalues of $\Sfi $  as a function of $m $ for $\cI=\{0,1,\dots, 9\}$. $r=.7$ }
 \label{versusm}
\end{figure}  
 \par\noindent  We shall now discuss the behavior of the eigenvalues of the matrix $\Sfi$ as the  parameter $m$ grows. By using  (\ref{matrice})  with   $l_k=m\, i_k$, $k=1,\dots,N$, (\ref{antif}),  (\ref{derdualiferr}) and (\ref{derfi12}),  one can see that,  when  the parameter 
$m$  tends  to infinity, the off-diagonal   entries   tend to zero;    for  each $\epsilon>0$ denote by $m_{\epsilon}$   the positive integer   (depending on 
$N$)  such that, if $m>m_{\epsilon}$,  the off-diagonal  entries of the  matrix  are less 
than $\epsilon$. For such $m_{\epsilon},$ by the  Gershgorin theorem,   $N$ eigenvalues lie in the circle of center $2\rf-\rf^2 $ 
and radius  $\epsilon$  and $N$ eigenvalues lie in the circle of 
center $\rf^2 $ and radius  $\epsilon$.  Figure~\ref{versusm} illustrates this behavior: it shows   the eigenvalues $\lambda_i$ $i=1,5,10,11,15,20$    in dependence of the parameter $m$ for $\rf=.7$ and    $\cI=\{0,1,\dots, 9\}$. Note that the limits of the eigenvalues  are $2\rf-\rf^2$ and $\rf^2$.

  \section{Reconstruction: numerical results} \label{s:Numerical results}  
In this section  we present some numerical experiments on the recovery of missing samples and   on  the reconstruction  of  a signal   via    oversampling formulas. In particular, we analyze   the  case of contiguous missing samples, when   the problem may become very   ill-conditioned, depending on the parameter $\rf $ and the number of missing samples.    Indeed  if $\rf$ is  close to 1, or the number of missing samples is large, the  condition number of the matrix $I-S$ is large, so that  the  system  amplifies  {the errors}  in the data. We solve  this problem, also in presence of noisy  data, by applying a  regularization technique typical of  the treatment of inverse problems (see \cite{BB}, Section 5). These techniques consist in considering a family of approximate solutions $ X_{\lambda}$ depending on a non-negative parameter $\lambda$, called    {\it regularization parameter}.  When the data are noise-free, the solution
$ X_{\lambda}$ converges to the exact solution  as the regularization parameter tends to zero.  {In the case of noisy data,   one can obtain an optimal approximation of the exact solution for a positive   value of the parameter.}\\
Following  Ferreira, in all our experiments we  shall use  the test function  
   \begin{equation}\label{ferreira}
\fer(x)=(\snc(\pi(x-2.1)))-0.7 (\snc(\pi(x+1.7))).
\end{equation}
which has band $[-\pi, \pi]$  \cite{F}. \par\noindent
  First we present some numerical experiments for the recovery of  contiguous  missing samples via the  one-channel formula   (\ref{rico2}). To compute the  sum in (\ref{B}), we must truncate it    to the values $k$ such that $|k|\le M$, for some integer $M$, thus introducing an error.  
In \cite{F} the author   makes the choice $M=40$ to  reconstruct   the signal (\ref{ferreira})     when    $\cU=\{0,1,\dots,5\}$ and $\rf= 0.6$ (hence $t_o=0.6)$. One can verify that   for $M=40$ the   norm of the truncation error is of order  $10^{-3}$ and that    choosing    $M=500$ this error is     reduced   to the order of $10^{-4}$, while, due to round-off errors, a  larger value of $M$  does  not reduce it  further.  In Table~\ref{500} we show the values $x_k=l_k  t_o$ (column 1)  the exact samples  $g(x_k)$ (column 2), the computed samples $g_k^*$   and the relative errors $ e_k^* $ with    $M=40$ (columns 3 and 4),
the computed samples $g_k$ and the relative errors $ e_k $ with    $M=500$ (columns 5 and  6). The improved  results obtained in the latter case  can be explained by observing that   the    condition number of the matrix $I-\S$,   which controls    the propagation of the  errors, is    $  3.07 \cdot 10^4$.
 \begin{table}[h]
\begin{center}
\begin{tabular}{|c|c||c|c||c|c|  }  \hline
\leavevmode
$x_k$ &$g(x_k)$ &  $g_{k}^* $  & $e_k^*$   &$g_k$ &$e_k$ 
 \\   \hline
$0.0$&$ \ \ 0.1529$&     $\ \  0.1132 $ &     $ 0.2598$   &     $ \ \  0.1498  $ &  $   0.0199 $   \\ 
$0.6 $  &$ -0.2906  $&   $ -0.5344$ & $0.8390 $                 & $  -0.3096  $ &  $0.0653 $  \\
 $  1.2 $    & $\ \ 0.0856 $&      $ -0.4833$ & $6.6498 $         & $\ \ 0.0410$  &  $0.5206 $   \\
$ 1.8 $   &  $  \ \ 0.9221$&   $ \ \ 0.2132$ & $0.7687$    &        $ \ \ 0.8664$ & $0.0603$    \\
$  2.4 $	 &  $\ \ 0.8416 $&       $\ \ 0.3498 $ & $ 0.5844$    &  $\ \ 0.8029 $  & $0.0459$  \\
$ 3.0 $	 &  $\ \  0.0710$	&   $-0.0872  $ & $ 2.2288$       &    $\ \  0.0585  $ &  $     0.1751$ \\
\hline
\end{tabular}
\smallskip
\end{center}
\caption{\small{Recovered samples with the one-channel formula,  r=0.6: exact samples (col 2),   computed samples and relative errors with $M=40$  (col.s  3 and 4) and $M=500$   (col.s  5 and 6).}}
\label{500}
\end{table}
\begin{figure}[h]
\begin{center}
\begin{tabular}{l l}
\hskip-1truecm
\includegraphics[width=7.0cm]{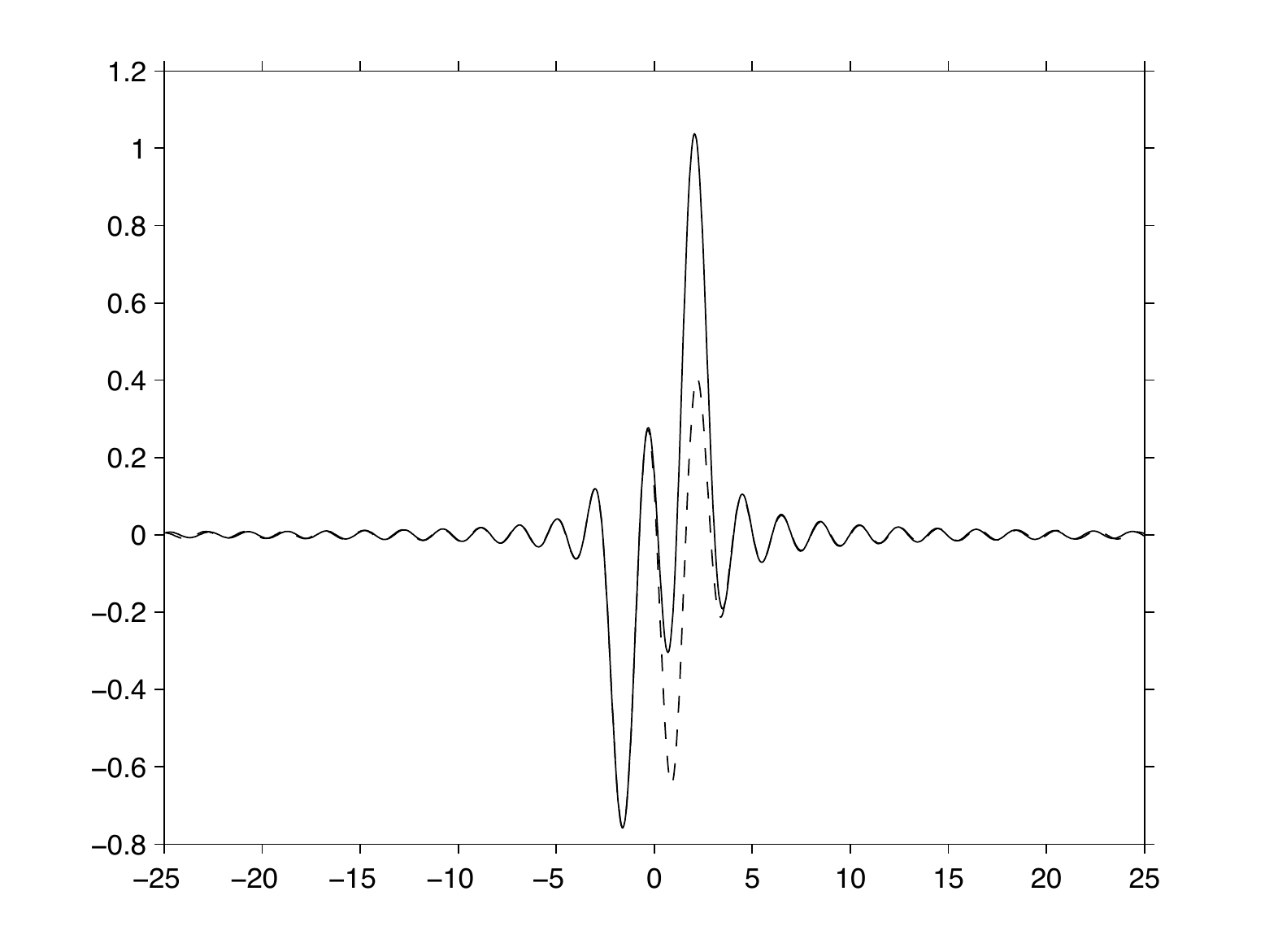}&
\includegraphics[width=7.0cm]{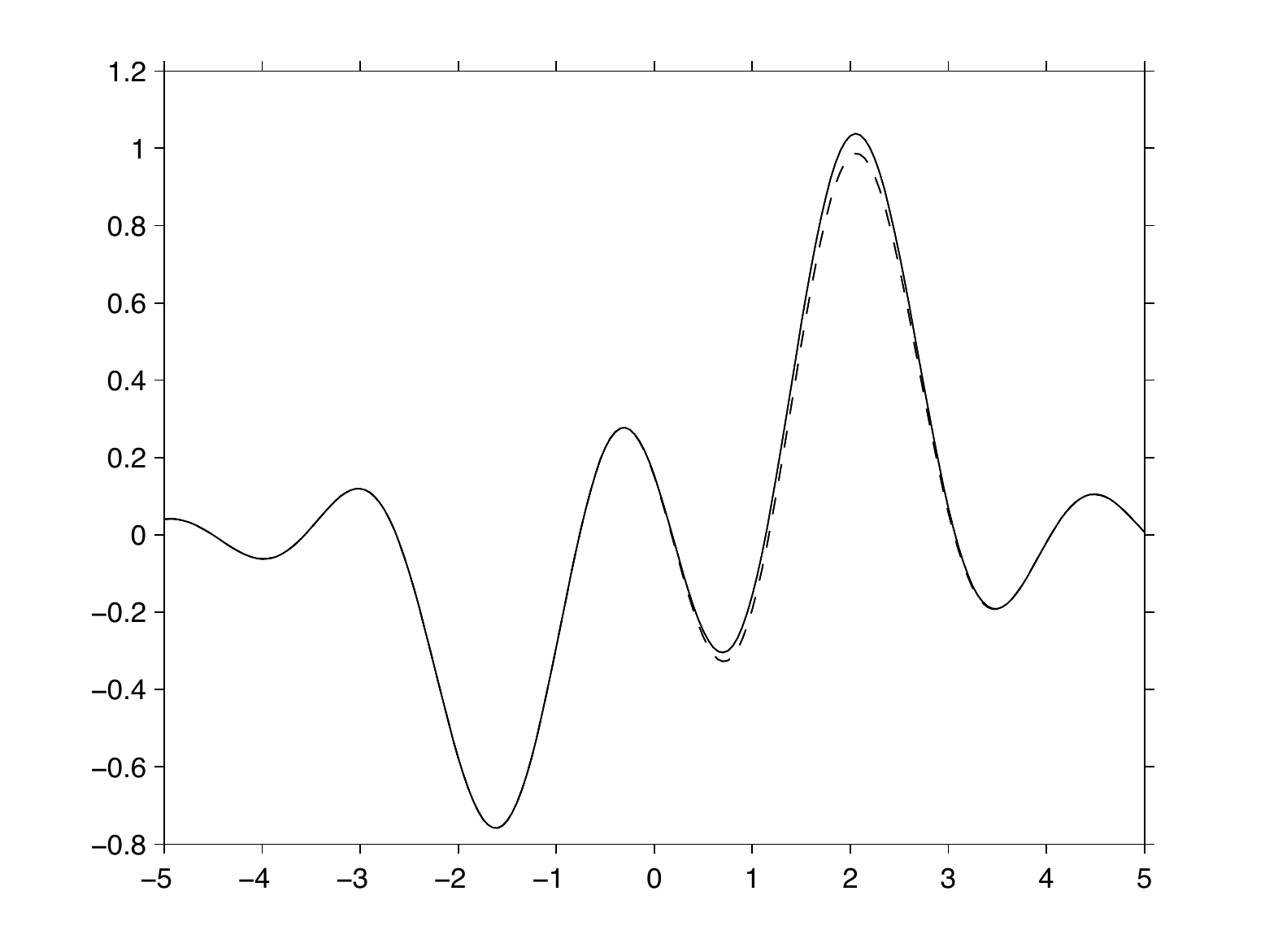}
\end{tabular}
\end{center}
\vskip-.5truecm
 \caption{Original (solid line) and reconstructed (dotted line) signal $g$, $\rf =.6$, $\cU=\{0,1,\dots,5\}$.  Truncation  $M= 40$  (left) and  $M= 500$ (right),    on the interval $[-5,5]$.} 
\label{ferr40V500new}
\end{figure} 
\noindent We have also plotted the graphs: in Figure~\ref{ferr40V500new}  we compare  the original and the reconstructed signal in both cases.  For $M=500$  we  have zoomed  the  graphs, representing them in the interval $[-5,5]$, to make them distinguishable. 
By the preceding considerations,   in  all  the experiments described below, we have chosen $M=500$ to reduce the error due to the truncation error.   \\
Next we consider   the problem of data  affected by noise; we choose  again   $\rf=0.6$  and  
$\cU=\{-2,-1,0,1,2,3\}$.    The  condition number of the matrix $I-\S$ is still  
$3.0769\cdot 10^4$, since  it does not depend  on the position of the 
contiguous missing samples (see (\ref{S1can})). 
  We have introduced  a random noise  of order  $10^{-2}$   on the data and  
   have solved the system (\ref{sistema}) by using the Tikhonov regularization technique.
  Since an estimate of the  norm of the noise is known, it is possible  
to use    the {\it discrepancy  principle}  to  choose   the  value of the 
regularization parameter $\lambda$ (see \cite{BB} and \cite{M}).    Table~\ref{dascrivere} summarizes  the results of this experiment;   columns  1 and 2 contain  the values  $x_k=l_k t_o$ and the exact  values  $g(x_k)$ of the 
 \begin{table}[h]
\begin{center}
\begin{tabular}{|c|c||c|c||c|c| }\hline
\leavevmode
$x_k$ &$g(x_k)$ &$g_k$ &$e_k$&$g^{r}_k$ &$e_k^{r}$ 
 \\   \hline
$-1.2$&$-0.5237$& $\  50.9126 $ &  $ \  \ 98.2227 $ 
& $-0.4852 $&$ 0.0734$ \\ 
$-0.6 $  &$ \ \ 0.1580 $& $ 180.6426  $ &  $1142.5116 $
& $\ \ 0.2141  $ &$0.3550$\\
 $\ \  0.0 $    & $\ \ 0.1529 $& $306.9469 $  &  $2006.8097   $
& $\ \ 0.1430$ &  $0.0652$   \\
$\ \ 0.6 $   &  $ -0.2906$	& $ 
307.9712 $ & $1060.8440$
& $-0.3608$  & $0.2416$   \\
$\ \ 1.2 $	 &  $\ \ 0.0856 $&  $183.2893 $  & $2141.4934$
&$\ \ 0.0435  $ & $0.4918$  \\
$\ \ 1.8 $	 &  $\ \  0.9221 $	& $\ \  53.8130   $ &  $\ \ \  57.3621 $
&$\ \ 0.9248 $  & $  0.0030$\\
\hline
\end{tabular}
\smallskip
\end{center}
\caption{\small{Recovered samples with the one-channel formula,  r=0.6,   with noise added: exact samples (col 2),   computed samples and relative errors without regularization (col.s  3 and 4), computed samples with regularization and the relative errors (col.s 5 and 6).}}
\label{dascrivere}
\end{table}
missing samples,   columns 3 and 4    contain 
  the recovered samples $g_k$  and the relative errors $e_k$ obtained  without regularization, while  columns 5 and 6 show the recovered samples $g^{r}_k$ and the relative errors $e^{r}_k$ obtained via regularization.  
  As in the previous example by comparing  columns 2 and 5, one can see that the absolute errors are very small:    the plots of the real and reconstructed signal would be  indistinguishable.    
\par
We shall now present some experiments  that involve  the two-channel formula.  To compute  the right-hand side of (\ref{sistema}), we have truncated the sums in (\ref{ckck})  retaining only the terms for which    $|n|\le 500$. As in the one-channel case, this leads to an error of order $10^{-4}$.
  Our experiments show that, as in the one-channel case, the recovery    and the reconstruction     
of missing samples of the function and of its  derivative   is rather  efficient  if the  distance between the    missing samples  is  large    and the oversampling parameter $\rf$ is not too close to 1. 
To give an example, for $\cI=\{-1,0,1,2,3,4\}$, $m=4$ and  $\rf=0.7$,    the maximum absolute error   is 
less then $8\cdot10^{-4}$. The error increases   when   the parameter $m$  gets smaller, or   $r$ larger.
 \par Next, we  experiment  the recovery  of   consecutive   missing 
samples   of $g$ and $g'$  when    
$\cU=\{ -2,-1,0,1,2,3 \}$,     $\rf=0.6$ (hence $t_o=1.2$).   The  results are shown in Tables \ref{duecanr6}  and   \ref{duecanr6der}.
 In this case   the condition number of the matrix $I-\Sfi$ in  (\ref{sistema}) is $\ 3.67\cdot10^7,$    much larger than in the one-channel case. This has the effect of propagating   strongly     the truncation error,  thus producing    huge errors  of the solution, much worse than in the one-channel case. Thus,  with this  distribution of missing points and this value of the parameter $\rf$ the system is very  ill-conditioned; an application of   the Tikhonov  regularization  technique  with discrepancy principle leads to  the  results  in columns 5 and 6 in    Table \ref{duecanr6}  and   \ref{duecanr6der}.  In Figure~\ref{last} we show  the plots of the original and the computed  signal $g$.
   \begin{table}[h]
\begin{center}
\begin{tabular}{|c|c||c|c||c|c| }\hline
\leavevmode
$x_k$ &$g(x_k)$ &$g _k$ &$e_k $&$g^{r}_k$ &$e_k^{r}$  \\   \hline
$-2.4$&$-0.1868$& $-0.0580$ &  $ \ -0.6893 $ &          $   -0.0785 $&$  0.5799$ \\ 
$-1.2 $  &$\ -0.5237 $& $\ \ 2.1433   $ &  $-5.0929$&             $  -0.4569 $ &$0.1274$\\
 $\  \  0.0  $    & $\ \  \ 0.1528$& $\ 10.3983$  &  $\ 67.0174   $&       $\ \  0.1832$ &  $0.1989 $   \\
 $\ \ 1.2 $   &  $\ \  \ 0.0856$	& $ 12.0422 $ & $ 139.7624$&       $ -0.1897$  & $3.2170$   \\
 $\ \ 2.4$	 &  $\ \   \  0.8416 $&  $\ \ 5.1810$  & $\ \  \ 5.1561$ &         $  -0.1480  $ & $1.1758$  \\
$\ \ 3.6$	 &  $\ -0.1782 $	& $\ \  0.1425   $ &  $ \, -1.7997 $&$   -0.4405 $  & $ 1.4720$\\
    \hline
\end{tabular}
\smallskip
\end{center}
\caption{\small{Recovered samples of $g$ with two channels,  $r=0.6$,  no noise added: exact samples (col. 2),   computed samples and relative errors (col.s  3 and 4), computed samples  and the relative errors (col.s 5 and 6) with regularization.}}
\label{duecanr6}
\end{table}
 \vskip-0.7truecm
  \begin{table}[h]
\begin{center}
\begin{tabular}{|c|c||c|c||c|c| }\hline
\leavevmode
$x_k$ &$g'(x_k)$ &$g'_k$ &${e'}_k $&$g'^{\,r}_k$ &${e'}_k^{\,r}$  \\   \hline
$-2.4$&$-0.9399$& $-0.4742$ &  $-0.4955$ &$-0.7919 $& $0.1575  $ \\ 
$-1.2 $  &$\ \ 1.0457$& $\ \ 5.5797   $ &  $\ \ 4.3356$&$\ \ 0.8325$& $0.2039$ \\
 $\ \   0.0  $    & $ -0.7350 $& $\ \ 5.4046$  &  $-8.3534 $ &  $ -0.5690 $ & $0.2258$  \\
  $\ \ 1.2 $ &  $\ \ 1.4159$& $ -2.6352    $ & $ \ \ 2.8611$  & $\ \ 0.5867 $ & $0.5857$  \\
 $\ \ 2.4$	 &  $ -1.0603$&  $-7.1265$  & $-5.7212$  & $-0.9327$ & $0.1203 $ \\
$\ \ 3.6$	 &  $\ \ \, 0.2127$& $\, -0.8092   $ &  $\ \ \,  4.8046$ & $\ \  \, 0.8036$ &$2.7786 $ \\
    \hline
\end{tabular}
\smallskip
\end{center}
\caption{\small{Recovered samples of $g'$ with two channels, $ r=0.6$,  no noise added: exact samples (col. 2),   computed samples and relative errors   (col.s  3 and 4), computed samples   and the relative errors (col.s 5 and 6) with regularization.}}
\label{duecanr6der}
\end{table}
Of course,   by using a  smaller oversampling parameter, one   can reduce  the condition number of the matrix $I-\Sfi$ , obtaining  much better results: using  $\rf=0.3$ (hence $t_o=0.6$),  and the same set  $\cU=\{ -2,-1,0,1,2,3 \}$,  one gets $\textrm{cond}(I-\Sfi)=1.92\cdot 10^4 $ and   maximum   absolute error  on the samples of the signal equal to $0.0078$.   
In Section 2  we have observed that,  in the  two-channel formula,    the case    $\rf \in (0,.5)$    implies oversampling in each channel separately, thus  it can be used  the one-channel formula separately for the signal and its derivative. However, the two alternative are not numerically equivalent.  To recover the same   missing samples via  the one-channel formula, we   have  taken    $\rf=0.6$ and $\cU=\{ -2,-1,0,1,2,3 \}$ and we have   obtained   a maximum   absolute error  on the samples of the signal equal to    $0.0274$. This shows that the second alternative  may be  less efficient in case of consecutive samples.
\begin{figure}[h]
\begin{center}
\centering\setlength{\captionmargin}{0pt}%
 \includegraphics[width=7cm,height=5.5cm]{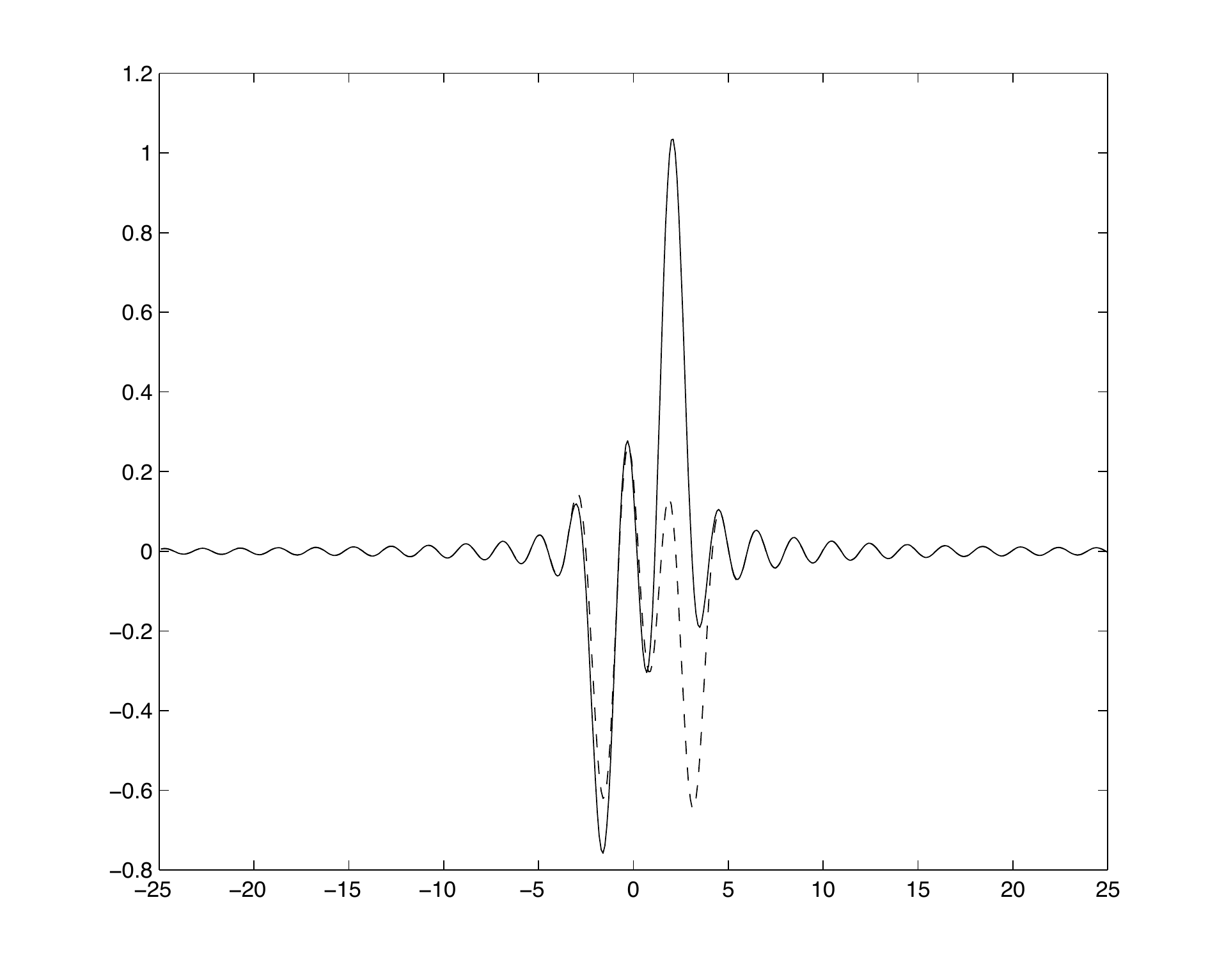}
  \end{center}
  \vskip-0.4truecm
  \caption{ Original and reconstructed signal $g$, recover with regularization, for $\cU=\{ -2,-1,0,1,2,3 \}$   $r=0.6$. }
 \label{last}
\end{figure}  

 \end{document}